\documentclass[12pt, a4paper, leqno]{amsart}

%\smartqed  % flush right qed marks, e.g. at end of proof

\usepackage{amsmath}
\usepackage{amsfonts}
\usepackage{amssymb}
\usepackage{times}
\usepackage{comment,graphicx}
\usepackage[T1]{fontenc} %skandit
\usepackage{url}
\usepackage{color, esint}
%\usepackage{amsthm}
%\usepackage[notref, notcite]{showkeys}
%\usepackage[colorlinks]{hyperref}

%\setlength{\oddsidemargin}{0.75cm}
%\setlength{\evensidemargin}{0.75cm}
%\setlength{\textwidth}{14.5cm}
%\setlength{\topmargin}{0cm}
%\setlength{\textheight}{24cm} % {24.6cm}
%\setlength{\marginparwidth}{2.2cm}
%\let\oldmarginpar\marginpar
%\renewcommand\marginpar[1]{\-\oldmarginpar[\raggedleft\footnotesize #1]%
%{\raggedright\footnotesize #1}}

%%% A4 = 210 * 297, US letter = 216 \~{A}ï¿½ 279
\newcommand{\norm}[1]{\ensuremath{\Vert #1 \Vert}}
\newcommand{\abs}[1]{\ensuremath{\vert #1 \vert}}

\DeclareMathOperator*{\essinf}{ess\,inf}

\newcommand{\Z}{\mathbb{Z}}
\newcommand{\R}{\mathbb{R}}
\newcommand{\Rn}{{\mathbb{R}^n}}

\newcommand{\W}{\mathcal{W}}
\newcommand{\V}{\mathcal{V}}
\newcommand{\M}{\mathcal{M}}

\newcommand{\I}{\mathcal{I}}

\newcommand{\p}{{p(\cdot)}}
\newcommand{\q}{{q(\cdot)}}
\newcommand{\px}{{p(\cdot)}}
\newcommand{\rx}{{r(\cdot)}}
\newcommand{\qx}{{q(\cdot)}}
\newcommand{\ax}{{\alpha(\cdot)}}

\newcommand{\PP}{{\mathcal P_1}}
\newcommand{\PPz}{{\mathcal P_0}}
\newcommand{\PPln}{{\mathcal P_1^{\log}}}
\newcommand{\PPzln}{{\mathcal P_0^{\log}}}

\newcommand{\dive}{\ensuremath{\mathrm{div}}}

\def\Xint#1{\mathchoice
     {\XXint\displaystyle\textstyle{#1}}%
        {\XXint\textstyle\scriptstyle{#1}}%
       {\XXint\scriptstyle\scriptscriptstyle{#1}}%
          {\XXint\scriptscriptstyle\scriptscriptstyle{#1}}%
             \!\int}
           \def\XXint#1#2#3{{\setbox0=\hbox{$#1{#2#3}{\int}$}
                \vcenter{\hbox{$#2#3$}}\kern-.5\wd0}}
            
\def\vint{\Xint-}

\def\phi{\varphi}
\def\rho{\varrho}
\def\epsilon{\varepsilon}
\def\abs#1{\mathopen| #1 \mathclose|}
\def\norm#1{\mathopen\| #1 \mathclose\|}

\def\le{\leqslant}
\def\leq{\leqslant}
\def\ge{\geqslant}
\def\geq{\geqslant}
\def\phi{\varphi}
\def\rho{\varrho}

\def\esssup{\operatornamewithlimits{ess\,sup}}

\def\min{\qopname\relax o{min}}
\def\px{{p(\cdot)}}
\def\loc{{\rm loc}}

\def\wL{{w\text{-}L}}
\newcommand{\weakL}{{w\text{-}L}}
\newcommand{\weakLpx}{{w\text{-}L^\px}}

\newcommand{\ud}[0]{\ensuremath{\,\textrm{d}}}

%Theorems, definitions,...

%€\swapnumbers
\theoremstyle{plain}
\newtheorem{theorem}[equation]{Theorem}
\newtheorem{lemma}[equation]{Lemma}
\newtheorem{proposition}[equation]{Proposition}
\newtheorem{corollary}[equation]{Corollary}

\theoremstyle{definition}
\newtheorem{definition}[equation]{Definition}

\newtheorem{example}[equation]{Example}

\theoremstyle{remark}

\numberwithin{equation}{section}

\def\be{\begin{equation}}
\def\ee{\end{equation}}

\begin{document}

\title{Riesz and Wolff potentials and elliptic equations in variable exponent weak Lebesgue spaces}

%\titlerunning{Riesz and Wolff potentials and elliptic equations}

%\authorrunning{Short form of author list} % if too long for running head

\title[Riesz and Wolff potentials and elliptic equations]{Riesz and Wolff potentials and elliptic equations in variable exponent weak Lebesgue spaces}

\author{A.\ Almeida, P.\ Harjulehto, P.\ H\"ast\"o
  and T.\ Lukkari} \date{\today}

\subjclass[2000]{47H99 (46B70, 46E30, 35J60, 31C45)}
\keywords{Variable exponent, weak Lebesgue space, real interpolation,
  Riesz potential, Wolff potential, non-standard growth condition,
  integrability of solutions, integrability estimates}

\maketitle

\begin{abstract}
We prove optimal integrability results for solutions of the $\px$-Laplace equation in the scale of (weak) Lebesgue spaces.
To obtain this, we show that variable exponent Riesz and Wolff potentials map $L^1$ to variable exponent weak Lebesgue spaces.
\end{abstract}
  
%\begin{abstract}
%Insert your abstract here. Include keywords, PACS and mathematical
%subject classification numbers as needed.
%\keywords{First keyword \and Second keyword \and More}
%% \PACS{PACS code1 \and PACS code2 \and more}
%% \subclass{MSC code1 \and MSC code2 \and more}
%\end{abstract}

%%%%%%%%%%%%%%%%%%%%%%%%%%%%%%%%%%%%%%%%%%%%%%%%%%%%%%%%
%%%%%%%%%%%%%%%%%%%%%%%%%%%%%%%%%%%%%%%%%%%%%%%%%%%%%%%%
\section{Introduction}

In this paper we study the mapping properties of variable exponent Riesz and Wolff
potentials on weak $L^{\p}$ spaces, denoted by $\wL^{\p}$. Our
interest stems mainly from the following problem, whose solution is presented in Section~\ref{sect:pde}.  Consider
appropriately defined weak solutions to the boundary value problem
\begin{equation}\label{eq:intro-px-equation}
  \begin{cases}
    -\dive(\abs{\nabla u}^{p(x)-2}\nabla u)=f&\text{in }\Omega,\\
    u=0& \text{on }\partial \Omega
  \end{cases}
\end{equation}
when the data $f$ is merely an $L^1$ function. We refer to \cite{HarHLN10} for an extensive survey of such equations with non-standard growth.
Based on the constant exponent case and computations
on explicit solutions, one expects in the $L^1$-situation that
\begin{equation}\label{eq:intro-incl}
  u\in \wL_\loc^{\frac{n}{n-\p}(\p-1)}(\Omega)\quad \text{and}\quad
  \abs{\nabla u }\in \wL_\loc^{\frac{n}{n-1}(\p-1)}(\Omega).
\end{equation}
By earlier results of Sanch\'on and Urbano
\cite[Remark~3.3]{SU}, the gradient belongs to the space $\wL_{\loc}^{\frac{n
    (\p-1)}{n-1} - \epsilon}(\Omega)$, while B\"ogelein and Habermann
\cite{BogH10} proved that it is in $L_{\loc}^{\frac{n
    (\p-1)}{n-1} - \epsilon}(\Omega)$, for any $\epsilon >0$. By
elementary properties of weak spaces
(Proposition~\ref{prop:weak-strong}) these two results are in fact equivalent.
However, as \eqref{eq:intro-incl} is the borderline case $\epsilon=0$,
it has turned out to be hard to reach. As in the constant exponent
case, when $\epsilon=0$ the inclusions into the (strong) Lebesgue space do not hold.

Our approach to this problem relies on the recent pointwise potential
estimates for solutions and their gradients to problems with $L^1$ or
measure data, see \cite{DM2,DM1,Min}. The case of equations similar
to \eqref{eq:intro-px-equation} is covered in \cite{BogH10}. The
potential that appears in the nonlinear situation is the Wolff
potential, given by
\[
  \W^{f}_{\alpha,p}(x):=\int_0^\infty\left( \frac{\int_{B(x,r)} \abs{f(y)} \, dy }{r^{n-\alpha p}}
  \right)^{1/(p-1)}\frac{dr}{r}.
\]
At a given point $x$, a solution to \eqref{eq:intro-px-equation} is
controlled by $\W^{f}_{1,p(x)}(x)$, and its gradient is controlled by
$\W^{f}_{1/p(x),p(x)}(x)$.

These estimates are the nonlinear counterparts of representation
formulas, as properties of solutions may be deduced from the
properties of the potentials. Our aim is to exploit this, and
establish a local version of \eqref{eq:intro-incl} by proving that the
Wolff potential $\W^{f}_{\alpha(x),p(x)}(x)$ has the appropriate
mapping properties.  This answers the open problem posed by Sanch\'on
and Urbano \cite[Remark~3.3]{SU} and completes the generalization of
the Wolff-potential approach for \eqref{eq:intro-px-equation} started
by B\"ogelein and Habermann in \cite{BogH10}.
%Thus potential
%estimates are an efficient tool in detecting the regularity properties
%of solutions in borderline cases.

The usual way to look at the mapping properties of the Wolff potential
is to estimate it pointwise by the Havin-Maz'ya potential (see
\cite{HM}), which is an iterated Riesz potential.  Thus we study the
mapping properties of the Riesz potential as well. For (strong)
Lebesgue spaces these properties are well known, see \cite{Die04b,SamSV07,Sam98} and \cite[Section~6.1]{DieHHR11}. Here we deal with the
novel case of weak Lebesgue spaces.

Our first result, Theorem~\ref{thm:R-L1-to-weak}, is the strong-to-weak estimate for the Riesz
potential $\I_\ax$.  We show that
\[
\I_\ax : L^\rx(\Omega) \to \wL^{r^\#_\alpha(\cdot)}(\Omega),
\]
where $\Omega$ is an open, bounded set in $\R^n$, the target space is a weak
variable exponent Lebesgue space and $r^\#_\alpha:= nr/(n-\alpha r)$
is the (pointwise) Sobolev conjugate of $r$. For $r^-:= \inf r >1$,
strong-to-strong boundedness has been known for ten years
\cite{Die04b}, so the novelty lies in the inclusion of the case
$r^-=1$. In contrast to the constant exponent case, this is not enough
for us; surprisingly, the fact that
\[
f\in L^\rx(\Omega) \quad\Longrightarrow  \quad (\I_\ax f)^\qx \in
\wL^{\frac{r^\#_\alpha(\cdot)}{\qx}}(\Omega)
\]
for every $\log$-H\"older continuous positive function $q$ requires a
separate proof. This proof is based on pointwise estimate between the
Riesz potential and the Hardy--Littlewood maximal operator.

Then we study how the Riesz potential acts on weak Lebesgue spaces, as
this situation will inevitably happen when dealing with the Wolff
potential on $L^1$. This turns out to be a difficult question because
the weak Lebesgue spaces are not well-behaved. We show that the weak
Lebesgue space is an interpolation space
(Theorem~\ref{thm:LorentzInfinity}).  This allows us to use real
interpolation to get weak-to-weak boundedness of the maximal operator:

\begin{theorem}\label{thm:strongWeakM}
%Let $p\in \PP(\Rn)$.
  Let $p$ be a bounded measurable function with $p^-> 1$. If $\M:
  L^\px (\Rn) \to L^\px(\Rn)$ is bounded, then so is $\M:
  \weakL^{\px}(\Rn) \to \weakL^{\px}(\Rn) $.

  In particular, $\M: \weakL^{\px}(\Rn) \to \weakL^{\px}(\Rn) $ is
  bounded when $p$ is $\log$-H\"older continuous and $p^->1$.
\end{theorem}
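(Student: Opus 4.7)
The plan is to derive the claim from the real interpolation identification in Theorem~\ref{thm:LorentzInfinity} together with the classical real interpolation theorem for sublinear operators. Concretely, Theorem~\ref{thm:LorentzInfinity} represents $\weakL^{\p}$ as a real interpolation space $(L^{p_0(\cdot)}, L^{p_1(\cdot)})_{\theta,\infty}$ whenever $\tfrac{1}{p(\cdot)} = \tfrac{1-\theta}{p_0(\cdot)} + \tfrac{\theta}{p_1(\cdot)}$ with $p_0$, $p_1$ admissible; so it suffices to locate two variable exponents $p_0(\cdot) < p(\cdot) < p_1(\cdot)$ on which $\M$ is bounded. Real interpolation will then automatically yield boundedness of $\M$ on the intermediate space, which is $\weakL^{\p}$.

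Building the \emph{upper} exponent $p_1(\cdot) = s\, p(\cdot)$ for any $s > 1$ is immediate: the pointwise bound $\M f \le (\M |f|^s)^{1/s}$, a direct consequence of Jensen's inequality, combined with the scaling identity $\bignorm{|g|^s}_{L^{p(\cdot)}} = \norm{g}_{L^{sp(\cdot)}}^s$, transfers boundedness from $L^{p(\cdot)}$ to $L^{sp(\cdot)}$. Building the \emph{lower} exponent is more delicate: I need $p_0(\cdot) = p(\cdot)/\sigma$ with $\sigma \in (1, p^-)$ on which $\M$ is still bounded. Here I would invoke Diening's ``left-openness'' (or ``jump'') theorem, which asserts that if $\M$ is bounded on $L^{p(\cdot)}$ with $p^- > 1$, then it is bounded on $L^{p(\cdot)/\sigma}$ for some $\sigma > 1$ sufficiently close to $1$.

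With both exponents in hand, solving the harmonic identity gives an interpolation parameter $\theta = (\sigma - 1)/(\sigma - 1/s) \in (0, 1)$; Theorem~\ref{thm:LorentzInfinity} identifies $(L^{p_0(\cdot)}, L^{p_1(\cdot)})_{\theta,\infty}$ with $\weakL^{\p}$, and the standard real interpolation theorem for sublinear operators completes the first assertion. The ``in particular'' statement follows at once, since when $p$ is $\log$-H\"older continuous with $p^- > 1$ the classical theorem of Diening (also obtained independently by Cruz-Uribe--Fiorenza--Neugebauer) guarantees that $\M$ is bounded on $L^{\p}$, so the first part applies.

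The main obstacle in this outline is the construction of the smaller exponent $p_0(\cdot)$: the hypothesis of boundedness on $L^{\p}$ gives, via the scaling argument, easy access to \emph{larger} exponents but not to smaller ones, and naive pointwise dilations could destroy the essential condition $p_0^- > 1$. Left-openness is the nontrivial ingredient that resolves this; once it is invoked the remainder of the proof is a routine application of abstract interpolation.
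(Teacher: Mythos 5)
Your overall strategy --- realize $\weakL^{\px}$ as a real interpolation space, find two endpoints on which $\M$ is bounded, and invoke the abstract interpolation theorem for sublinear operators --- is the same as the paper's, and you correctly identify Diening's left-openness as the decisive nontrivial ingredient (invoked as \cite[Theorem 5.7.2]{DieHHR11} in the paper, giving boundedness of $\M$ on $L^{s\px}(\Rn)$ for some $s<1$). However, there is a genuine gap in the framework you set up: Theorem~\ref{thm:LorentzInfinity} does \emph{not} establish the general identity
\begin{equation*}
\big(L^{p_0(\cdot)}(\Rn),L^{p_1(\cdot)}(\Rn)\big)_{\theta,\infty} = \weakL^{\px}(\Rn)
\qquad\text{when}\qquad
\frac{1}{p(\cdot)}=\frac{1-\theta}{p_0(\cdot)}+\frac{\theta}{p_1(\cdot)}.
\end{equation*}
It only proves the special case $p_1\equiv\infty$, i.e.\ $\big(L^{(1-\theta)\px}(\Rn), L^\infty(\Rn)\big)_{\theta,\infty}=\weakL^\px(\Rn)$. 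The two-finite-exponent version you invoke is a considerably stronger statement, not proved in the paper; the authors even remark that real interpolation in the variable exponent setting is delicate and refer to \cite{AlmH_pp13,KemV_pp13}. So the step where you interpolate $\M$ between $L^{p(\cdot)/\sigma}$ and $L^{s p(\cdot)}$ with $s>1$ and conclude boundedness on the ``intermediate'' space $\weakL^{\px}$ is unsupported.

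The fix is also a simplification that removes one of your two constructions. Take the upper endpoint to be $L^\infty(\Rn)$ rather than $L^{sp(\cdot)}(\Rn)$; then the Jensen-plus-scaling argument for the larger exponent is unnecessary, since $\M\colon L^\infty\to L^\infty$ is trivially bounded. Concretely: by left-openness there is $s<1$ with $\M$ bounded on $L^{s\px}(\Rn)$; set $\theta:=1-s\in(0,1)$ so that Theorem~\ref{thm:LorentzInfinity} gives $\big(L^{s\px},L^\infty\big)_{\theta,\infty}=\weakL^{\px}$; interpolate $\M$ between $L^{s\px}$ and $L^\infty$. This is exactly the paper's route, phrased there via its Corollary with $\lambda=1/s>1$. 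With this correction your outline is sound and reproduces the paper's argument.
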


With a complicated application of Hedberg's trick, we then
prove in Theorem~\ref{thm:I-is-bdd-weak} that
\[
f\in \weakL^{q(\cdot)}(\Omega) \text{ and }
|f|^{q(\cdot)/q^\#_\alpha(\cdot)}\in
\weakL^{q^\#_\alpha(\cdot)}(\Omega) \quad\Longrightarrow  \quad
\I_\ax f \in \weakL^{q^\#_\alpha(\cdot)}(\Omega).
\]
We combine these results, and obtain in Theorem~\ref{thm:wolff} that
\begin{equation}\label{eq:wolff-map-intro}
f\in L^1 (\Omega)
\quad\Longrightarrow \quad
\W^f_{\alpha(x), p(x)} \in \wL^{\frac{n(\p-1)}{n-\ax\px }}(\Omega).
\end{equation}

A combination of \eqref{eq:wolff-map-intro} and the pointwise
potential estimates now yields \eqref{eq:intro-incl}, provided that an
appropriate notion of solutions to \eqref{eq:intro-px-equation} is
used.  This requires some care, as $L^1(\Omega)$ is not contained in the
dual of the natural Sobolev space $W^{1,\p}_0(\Omega)$.  Here we use
the notion of \emph{solutions obtained as limits of approximations},
or SOLAs for short. The idea is to approximate $f$ with more regular
functions, prove uniform a priori estimates in a larger Sobolev space
$W^{1,\q}_0(\Omega)$, and then pass to the limit by compactness
arguments. This way, one finds a function $u\in W^{1,\q}_0(\Omega)$ such
that \eqref{eq:intro-px-equation} holds in the sense of distributions.
See e.g. \cite{BG,BG2,KM} for a few implementations of this basic
idea, and \cite{L2,SU} for equations similar to the $\p$-Laplacian.
In fact, the same approximation approach is used in proving the
potential estimates.

A representative special case of what comes out by combining nonlinear
potential estimates and our results about the Wolff potential is the
following theorem.
\begin{theorem}\label{thm:SOLA-weak0}
  Let $f \in L^1(\Omega)$, and let $p$ be bounded and H\"older
  continuous with $p^-\geq 2$.  Suppose that $u$ is a SOLA to
  \eqref{eq:intro-px-equation}. Then
  \begin{displaymath}
    u\in \wL_{\loc}^{\frac{n(\p-1)}{n-\p}}(\Omega) \quad\text{and}\quad \abs{\nabla u}\in \wL_{\loc}^{\frac{n (\p-1)}{n-1}}(\Omega).
  \end{displaymath}
\end{theorem}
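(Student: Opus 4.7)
The plan is to combine two already-developed ingredients: the pointwise Wolff-potential bounds for $p(x)$-Laplace equations from B\"ogelein--Habermann \cite{BogH10} (with the underlying ideas from \cite{DM1,DM2,Min}), and the mapping property \eqref{eq:wolff-map-intro} of the variable exponent Wolff potential established earlier in the paper (Theorem~\ref{thm:wolff}). The structural hypotheses ($p$ H\"older continuous, bounded, $p^-\ge 2$) are exactly those under which the nonlinear potential estimates are available for SOLAs; the Hölder continuity of $p$ (hence log-Hölder) is also what is needed to invoke Theorem~\ref{thm:wolff}.

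More concretely, fix a ball $B\Subset\Omega$. The first step is to quote the pointwise Wolff-potential estimates for SOLAs: there exists $C=C(n,p,\dots)$ such that for a.e.\ $x\in B$,
\[
\abs{u(x)} \;\le\; C\,\W^{f}_{1,p(x)}(x) \,+\, \text{harmless lower-order term on } 2B,
\]
\[
\abs{\nabla u(x)} \;\le\; C\,\W^{f}_{1/p(x),p(x)}(x) \,+\, \text{harmless lower-order term on } 2B,
\]
where ``harmless'' means a term that is bounded on $B$ (typically an $L^1$-mean of $u$ or $\abs{\nabla u}$ over $2B$, which is controlled because SOLAs lie in $W^{1,q(\cdot)}_0$ for an appropriate $q$). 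Thus membership of $u$ and $\nabla u$ in the target weak spaces on $B$ reduces to the analogous membership of the two Wolff potentials.

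Step two is to apply Theorem~\ref{thm:wolff} twice. Choosing $\alpha(x)\equiv 1$ and replacing $f$ by $f\chi_{2B}\in L^1(\Omega)$ gives
\[
\W^{f\chi_{2B}}_{1,p(\cdot)}\in \weakL^{\frac{n(\p-1)}{n-\px}}(\Omega),
\]
which yields the stated inclusion for $u$ on $B$. Choosing $\alpha(x):=1/p(x)$ (which is Hölder, hence log-Hölder continuous, with $\alpha^-=1/p^+>0$ and $\alpha(x)p(x)=1<n$) and the same truncation gives
\[
\W^{f\chi_{2B}}_{1/p(\cdot),p(\cdot)}\in \weakL^{\frac{n(\p-1)}{n-1}}(\Omega),
\]
whence $\nabla u$ belongs to this space on $B$. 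Covering $\Omega$ by such balls and using that weak Lebesgue membership localizes gives the claim.

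The principal technical point is the passage from pointwise potential control of a fixed SOLA to the weak-space inclusions via Theorem~\ref{thm:wolff}: one must verify the admissibility of $\alpha(\cdot)=1/p(\cdot)$ in that theorem (log-H\"older regularity of $\alpha p$ away from the critical exponent $n$, which is immediate since $\alpha p\equiv 1$) and ensure that the localization via $f\chi_{2B}$ does not create artefacts (the truncated Wolff potential differs from the genuine one by a tail that is bounded on $B$, again absorbable). Everything else is an invocation of existing machinery.
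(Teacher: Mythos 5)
Your proposal follows essentially the same route as the paper: the paper derives Theorem~\ref{thm:SOLA-weak0} as the case $r\equiv 1$ of Theorem~\ref{thm:SOLA-weak}, which is obtained by combining the pointwise potential estimates of Theorem~\ref{thm:potential-estimates} (with $\alpha\equiv 1$ for $u$ and $\alpha=1/p(\cdot)$ for $\nabla u$) with the Wolff-potential mapping bound of Theorem~\ref{thm:wolff}, exactly as you do. One small simplification worth noting: the truncation $f\chi_{2B}$ is unnecessary, since $f\in L^1(\Omega)$ already makes Theorem~\ref{thm:wolff} applicable and the truncated potential $\W^{f}_{\alpha,p}(x,2R)$ is pointwise dominated by the full one; also, as in the paper, the implicit assumption $p^+<n$ must be in force for the exponent $\frac{n(\px-1)}{n-\px}$ and the hypothesis $(\alpha p r)^+<n$ of Theorem~\ref{thm:wolff} (with $\alpha\equiv1$, $r\equiv1$) to make sense.
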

In other words, \eqref{eq:intro-incl} holds locally under suitable
assumptions.  Similar results also follow for the fundamental objects
of nonlinear potential theory, the $\p$-superharmonic
functions. Finally, by examining the counterpart of the fundamental
solution (Example~\ref{eg:fundamental-sol}) we show that the
exponents in Theorem \ref{thm:SOLA-weak0} are sharp, as expected.

%%%%%%%%%%%%%%%%%%%%%%%%%%%%%%%%%%%%%%%%%%%%%%%%%%%%%%%%
%%%%%%%%%%%%%%%%%%%%%%%%%%%%%%%%%%%%%%%%%%%%%%%%%%%%%%%%
%%%%%%%%%%%%%%%%%%%%%%%%%%%%%%%%%%%%%%%%%%%%%%%%%%%%%%%%

\section{Notation}

We write simply $A\lesssim B$ if there is a constant $c$ such that $A\le cB$.  We also use the notation $A\approx B$ when $A\lesssim B$ and $A\gtrsim B$. For compatible vector spaces, the space $X\cap Y$ is defined by the norm $\|f\|:= \max\{\|f\|_X,\|f\|_Y\}$ while $X+Y$ is defined
by $\|f\|:= \inf_{f_1+f_2=f} \|f_1\|_X + \|f_2\|_Y$.

Let $U\subset \Rn$. For $g:U \to \R$ and $A\subset U$ we denote
\[
g^+_A := \esssup_{x\in A} g(x)
\quad\text{and}\quad
g^-_A := \essinf_{x\in A} g(x)
\]
and abbreviate $g^+ := g^+_U$ and   $g^- := g^-_U$.
We say that $g:U \to \R$ satisfies
the \textit{local $\log$-H\"{o}lder continuity} condition if
\[
|g(x)-g(y)|\leq \frac{c}{\log(e+1/|x-y|)}
\]
for all $x,y\in U$.  We will often use the fact that $g$ is
locally $\log$-H\"{o}lder continuous if and only if
\begin{equation}\label{eq:logH-osc}
  |B|^{g^-_B - g^+_B}\lesssim 1
\end{equation}
for all balls $B \cap U \neq \emptyset$.
If
\[
|g(x)-g_\infty|\leq \frac{c'}{\log(e+|x|)}
\]
for some $g_\infty\ge 1$, $c'>0$ and all $x\in U$,
then we say $g$ satisfies the
\textit{$\log$-H\"{o}lder decay condition} (\textit{at infinity}).
If both conditions are satisfied, we simply speak of
\textit{$\log$-H\"{o}lder continuity}. By the \textit{$\log$-H\"{o}lder constant} we mean $\max\{c, c'\}$.

By a \textit{variable exponent} we mean a measurable function $p:U \to (0,
\infty)$ such that $0< p^-\le p^+ <\infty$.
The set of variable exponents is denoted by $\PPz(U)$;
$\PP(U)$ is the subclass with $1\le p^-$.
By $\PPzln(U)$ and $\PPln(U)$ we denote the respective subsets consisting of
$\log$-H\"{o}lder continuous exponents.

We define a \emph{modular} on the set of measurable functions by
setting
\[
\varrho_{L^{p(\cdot)}(U)}(f) :=\int_{U} |f(x)|^{p(x)}\,dx.
\]
The \emph{variable exponent Lebesgue space $L^{p(\cdot)}(U)$}
consists of all the measurable functions $f\colon U\to \R$ for which the
modular $\varrho_{L^{p(\cdot)}(U)}(f)$ is finite. The Luxemburg norm on this space is defined as
\[
\|f\|_{L^{p(\cdot)}(U)}:=
\inf\Big\{\lambda > 0\,\colon\, \varrho_{L^{p(\cdot)}(U)}\big(\tfrac{f}\lambda\big)\leq 1\Big\}.
\]
Equipped with this norm, $L^{p(\cdot)}(U)$ is a Banach space.
We use the abbreviation $\|f\|_\px$ to denote the norm in the whole space under
consideration. The norm and the modular are related by the inequalities
\begin{equation}
  \label{eq:mod-norm}
  \min\{\|f\|_{L^{p(\cdot)}(U)}^{p^+},\|f\|_{L^{p(\cdot)}(U)}^{p^-}\}\leq
  \varrho_{L^{p(\cdot)}(U)}(f)
  \leq \max\{\|f\|_{L^{p(\cdot)}(U)}^{p^+},\|f\|_{L^{p(\cdot)}(U)}^{p^-}\}.
\end{equation}

For open sets $U$, the \emph{variable exponent Sobolev space
  $W^{1,p(\cdot)}(U)$} consists of functions $u\in L^{p(\cdot)}(U)$
whose distributional gradient $\nabla u$ belongs to
$L^{p(\cdot)}(U)$. The norm
\[
\|u\|_{W^{1,p(\cdot)}(U)}:=\|u\|_{L^{p(\cdot)}(U)}+\|\nabla u\|_{L^{p(\cdot)}(U)}
\]
makes $W^{1,p(\cdot)}(U)$ a Banach space.  The Sobolev space with zero
boundary values, denoted by $W^{1,p(\cdot)}_0(U)$, is the completion of
$C_0^\infty(U)$ with respect to the norm of $W^{1,p(\cdot)}(U)$. This
definition does not cause any difficulties: the assumptions on $p$ in
Section \ref{sect:pde}, where we use Sobolev spaces, are enough
to guarantee that smooth functions are dense in the Sobolev space.

More information and proofs for the above facts can be found for
example from \cite[Chapters~2,~4,~8, and~9]{DieHHR11}.

\emph{By $\Omega$ we always denote an open bounded set in $\Rn$.}

In auxiliary results we use the \textit{convention} that constants 
(implicit or explicit) depend on the 
assumptions stated in the result. For instance, in 
Proposition~\ref{prop:weak-strong} the assumptions are that 
$p,q\in \PPz(\Omega)$ and $(p-q)^->0$, 
so in this case, the implicit 
constant (potentially) depends on $p^-$, $p^+$, $q^-$, $q^+$, 
$(p-q)^-$, and on the dimension $n$.

%%%%%%%%%%%%%%%%%%%%%%%%%%%%%%%%%%%%%%%%%%%%%%%%%%%%%%%%
%%%%%%%%%%%%%%%%%%%%%%%%%%%%%%%%%%%%%%%%%%%%%%%%%%%%%%%%
%%%%%%%%%%%%%%%%%%%%%%%%%%%%%%%%%%%%%%%%%%%%%%%%%%%%%%%%

\section{Basic properties of weak Lebesgue spaces}\label{sec:basic}

\begin{definition}\label{def:weakL}
Let $A\subset \Rn$ be measurable. A measurable function $f:A\to \R$
belongs to the \textit{weak Lebesgue space} $\wL^\p (A)$ if
\[
\| f\|_{\weakLpx(A)} := \sup_{\lambda>0} \lambda\, \| \chi_{\{|f|>
\lambda\}}\|_{L^\px(A)}< \infty.
\]
\end{definition}

The inequalities \eqref{eq:mod-norm} imply that the requirement in
Definition~\ref{def:weakL} is equivalent with
\begin{equation}
  \label{eq:weak-lp-modular}
  \sup_{\lambda >0}\int_{\{\vert f\vert>\lambda\}} \lambda^{p(x)}\,dx<\infty.
\end{equation}
Another immediate consequence of \eqref{eq:mod-norm} which we will use
in the proofs below is that
\begin{equation}
  \label{eq:weak-lp-modular2}
  \| f\|_{\weakLpx(A)}\leq 1 \quad \text{if and only if}\quad
  \sup_{\lambda >0}\int_{\{\vert f\vert>\lambda\}} \lambda^{p(x)}\,dx \leq 1.
\end{equation}

We immediately obtain the following two inclusions:
\begin{itemize}
\item
$L^\p(\Rn)\subset \wL^\p(\Rn)$
, since $\lambda \chi_{\{|f|>\lambda\}} \le |f|$;
\item
for bounded sets,
$\wL^\p(\Omega) \subset \wL^{\q}(\Omega)$ when $p\ge q$, since the inequality
$\|\cdot\|_\px \gtrsim \|\cdot\|_\qx$ holds for the corresponding strong spaces.
\end{itemize}

The following result is from \cite[Proposition~2.5]{SU}. We
present a simpler proof here.

\begin{proposition}\label{prop:weak-strong}
Let $p,q\in \PPz(\Omega)$.
If $(p-q)^->0$, then $\wL^\p(\Omega) \subset L^{\q}(\Omega)$.
\end{proposition}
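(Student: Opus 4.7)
The plan is to mimic the classical constant-exponent argument that weak $L^p$ is contained in $L^q$ locally when $q<p$, using a layer-cake representation of the modular together with the characterization \eqref{eq:weak-lp-modular2} of the weak-norm in terms of super-level sets. By homogeneity of both quasi-norms I normalize so that $\|f\|_{\weakL^{\px}(\Omega)}\leq 1$, in which case \eqref{eq:weak-lp-modular2} gives
\[
\sup_{\lambda>0}\int_{\{\abs{f}>\lambda\}}\lambda^{p(x)}\,dx\leq 1.
\]
Since $q^+<\infty$, it then suffices to show that the modular $\varrho_{L^{\qx}(\Omega)}(f)$ is bounded by a constant depending only on the quantities listed after Proposition~\ref{prop:weak-strong}.

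I split $\Omega=\{\abs{f}\leq 1\}\cup\{\abs{f}>1\}$. On the first set $\abs{f}^{q(x)}\leq 1$, contributing at most $\abs{\Omega}$ to the modular. On $\{\abs{f}>1\}$ I use the pointwise identity
\[
\abs{f(x)}^{q(x)}=1+q(x)\int_1^{\abs{f(x)}}\lambda^{q(x)-1}\,d\lambda
\]
and Fubini to obtain
\[
\int_{\{\abs{f}>1\}}\abs{f(x)}^{q(x)}\,dx\leq \abs{\Omega}+\int_1^\infty\!\!\int_{\{\abs{f}>\lambda\}}q(x)\lambda^{q(x)-1}\,dx\,d\lambda.
\]

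The key step is to feed in the weak bound together with the hypothesis $(p-q)^->0$. For every $\lambda\geq 1$ and a.e.\ $x$,
\[
\lambda^{q(x)-1}=\lambda^{p(x)}\cdot\lambda^{q(x)-p(x)-1}\leq \lambda^{p(x)}\cdot\lambda^{-(p-q)^{-}-1},
\]
since $q(x)-p(x)\leq -(p-q)^{-}$ and $\lambda\geq 1$. Integrating against $q(x)\leq q^+$ and using the weak-norm bound yields
\[
\int_{\{\abs{f}>\lambda\}}q(x)\lambda^{q(x)-1}\,dx\leq q^+\lambda^{-(p-q)^{-}-1}.
\]
Finally $\int_1^\infty \lambda^{-(p-q)^{-}-1}\,d\lambda=1/(p-q)^{-}$, which is finite precisely because of the hypothesis $(p-q)^->0$. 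Assembling the pieces gives $\varrho_{L^{\qx}(\Omega)}(f)\leq 2\abs{\Omega}+q^+/(p-q)^{-}$ whenever $\|f\|_{\weakL^{\px}(\Omega)}\leq 1$, which implies $f\in L^{\qx}(\Omega)$ with a norm controlled by its weak quasi-norm.

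There is no real obstacle here: the only subtlety is the interplay between the weak-type control, which only sees $\lambda^{p(x)}$, and the modular one wants to bound, which sees $\lambda^{q(x)-1}$. The quantitative gap $(p-q)^{-}>0$ is exactly what is needed to multiply the weak-type inequality against a function of $\lambda$ that is integrable at infinity, and this is where the hypothesis enters in an essential way. The restriction to bounded $\Omega$ is used only to absorb the low-level part $\{\abs{f}\leq 1\}$.
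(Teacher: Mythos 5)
Your proof is correct and follows essentially the same strategy as the paper's: decompose by level sets of $|f|$, feed in the weak-type bound $\int_{\{|f|>\lambda\}}\lambda^{p(x)}\,dx\lesssim 1$ on each level, and let the gap $(p-q)^->0$ make the resulting sum over levels converge. The only difference is cosmetic: you use the continuous layer-cake representation and Fubini, while the paper uses the dyadic decomposition $E_i:=\{2^i\le|f|<2^{i+1}\}$ and a geometric series; these are interchangeable formulations of the same idea.
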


\begin{proof}
Let $f \in \wL^\p(\Omega)$. We write $E_i:= \{2^{i} \le |f| < 2^{i+1}\}$  for every $i=0,1,2,\ldots$\@
Then $\Omega = \bigcup_{i=0}^\infty E_i \cup \{|f| < 1\}$. We obtain
\begin{align*}
\int_\Omega |f|^{q(x)} \, dx &\le \sum_{i=0}^\infty \int_{E_i} 2^{(i+1)q(x)} \, dx + |\Omega|\\
&\le 2^{q^+} \sum_{i=0}^\infty \int_{\{|f| \ge 2^i\}} 2^{ip(x)} 2^{-i(p(x)-q(x))} \, dx + |\Omega| \\
&\le 2^{q^+} \sum_{i=0}^\infty 2^{-i(p-q)^-} \int_{\{|f| \ge 2^i\}} 2^{ip(x)}  \, dx + |\Omega| \\
&\le 2^{q^+} \max\Big\{\|f\|_{\wL^\p (\Omega)}^{p^+},\|f\|_{\wL^\p (\Omega)}^{p^-} \Big\}  \sum_{i=0}^\infty  2^{-i(p-q)^-}+ |\Omega|  <\infty.~
%\qed
%\qedhere
\end{align*}
\end{proof}

Note that Proposition~\ref{prop:weak-strong} works not only for bounded sets but also for every open set with a finite measure.  It can be similarly proved that
\[
\wL^\p(\Rn) \subset L^{\q}(\Rn) + L^{r(\cdot)}(\Rn)
\]
for all exponents $p,q,r$ with $(p-q)^->0$ and $r\ge p$.

It is easy to show that $f\in L^\px(\Rn)$ if and only if $|f|^\qx\in L^\frac\px\qx(\Rn)$.
However, the same is not true for the weak Lebesgue space. Indeed, in this case the following property holds:

\begin{proposition}\label{prop:notWorking}
Let $p\in \PPz(\Rn)$.
Then $|f|^\qx\in \weakL^\frac\px\qx (\Rn)$ for every function
$q:\Rn\to (0,\infty)$ if and only if $f\in L^\px(\Rn)$.

If $q$ is constant, then $f\in \weakL^\px(\Rn)$ if and only if $|f|^q\in \weakL^\frac\px q (\Rn)$.
\end{proposition}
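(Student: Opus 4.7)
The proposition has two assertions, and I would treat them in order, using the constant-$q$ statement as a warm-up. For constant $q>0$ the argument is a one-line computation: the modular characterization \eqref{eq:weak-lp-modular2} together with the change of variable $\nu=\mu^q$ gives
\[
\sup_{\nu>0}\int_{\{|f|^q>\nu\}} \nu^{p(x)/q}\,dx \;=\; \sup_{\mu>0}\int_{\{|f|>\mu\}} \mu^{p(x)}\,dx,
\]
so the conditions $|f|^q\in\weakL^{p(\cdot)/q}(\R^n)$ and $f\in\weakL^{p(\cdot)}(\R^n)$ are equivalent (in fact $\bignorm{|f|^q}_{\weakL^{p(\cdot)/q}}=\|f\|_{\weakL^{p(\cdot)}}^q$).

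For the first assertion one direction is immediate: if $f\in L^{p(\cdot)}(\R^n)$ then the modulars satisfy $\varrho_{L^{p(\cdot)/q(\cdot)}}(|f|^q)=\varrho_{L^{p(\cdot)}}(f)<\infty$, so $|f|^q\in L^{p(\cdot)/q(\cdot)}\subset\weakL^{p(\cdot)/q(\cdot)}$ for every $q$. The substance lies in the converse, which I would prove by contraposition: given $f\notin L^{p(\cdot)}(\R^n)$, exhibit a single function $q:\R^n\to(0,\infty)$ for which $|f|^q\notin\weakL^{p(\cdot)/q(\cdot)}$.

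The choice of $q$ is the heart of the argument. I would flatten $|f(x)|^{q(x)}$ to a step function with values in $\{0,\tfrac12,1,2\}$ by setting
\[
q(x) := \begin{cases}
\dfrac{\log 2}{\bigl|\log|f(x)|\bigr|} & \text{if } |f(x)|\notin\{0,1\},\\[4pt]
1 & \text{otherwise,}
\end{cases}
\]
so that $|f|^q=2$ on $\{|f|>1\}$, $|f|^q=\tfrac12$ on $\{0<|f|<1\}$, $|f|^q=1$ on $\{|f|=1\}$ and $|f|^q=0$ on $\{f=0\}$. With this choice the super-level sets $\{|f|^q>\lambda\}$ are simple and, on each of them, the integrand $\lambda^{p(x)/q(x)}$ in \eqref{eq:weak-lp-modular} reduces to $|f(x)|^{p(x)s(\lambda)}$ for an explicit scalar $s(\lambda)$. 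The key step is to exploit monotone convergence as $\lambda$ approaches the thresholds: letting $\lambda\to 2^-$ on $\{|f|>1\}$ recovers $\int_{\{|f|>1\}}|f|^{p(x)}\,dx<\infty$ (there $s(\lambda)\nearrow 1$ from below and $|f|>1$ makes the integrand increase); letting $\lambda\to(1/2)^-$ on $\{0<|f|<1\}$ recovers $\int_{\{0<|f|<1\}}|f|^{p(x)}\,dx<\infty$ (there $s(\lambda)\searrow 1$ from above and $|f|<1$ again makes the integrand increase); and letting $\lambda\to 1^-$ from $(1/2,1)$ yields $|\{|f|=1\}|<\infty$ via the trivial bound $\lambda^{p^+}|\{|f|=1\}|\le\int_{\{|f|=1\}}\lambda^{p(x)}\,dx$. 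Summing the three contributions produces $\varrho_{L^{p(\cdot)}}(f)<\infty$, contradicting $f\notin L^{p(\cdot)}$.

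The main obstacle is precisely the design of $q$: it must simultaneously (i) flatten $|f|^q$ into a step function so that the super-level sets are tractable, and (ii) make the integrand approach $|f|^{p(x)}$ \emph{monotonically from below} on each level-set regime, so that monotone convergence turns the divergence $\varrho_{L^{p(\cdot)}}(f)=\infty$ into divergence of the weak norm. The proposed $q$ is tuned precisely so that $s(\lambda)$ approaches $1$ from the correct side in each regime; this is the essential point a naive constant-$q$ argument misses, consistent with the fact that constant $q$ only recovers the weaker equivalence in the second assertion.
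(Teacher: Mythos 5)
Your argument is correct and follows the same overall strategy as the paper (tailor a witness $q$, then extract $\rho_{p(\cdot)}(f)$ from the uniform bound on $\int_{\{|f|^q>\lambda\}}\lambda^{p(x)/q(x)}\,dx$), but the construction of $q$ and the extraction step are genuinely different. The paper uses \emph{two} exponents, defined implicitly by $(1/2)^{1/q(x)}=\tfrac12\min\{|f(x)|,1\}$ for the small values of $f$ and $2^{1/q(x)}=\tfrac12\max\{|f(x)|,1\}$ for the large values; these are cooked up so that the level set at a \emph{single} $\lambda$ ($\lambda=1/2$, resp.\ $\lambda=2$) is $\{|f|>0\}$ (resp.\ $\supset\{|f|\ge 1\}$), and evaluating $\lambda^{p(x)/q(x)}$ at that $\lambda$ produces $2^{-p(x)}\min\{|f|,1\}^{p(x)}$ directly, with no limit taken. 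Your $q$ instead flattens $|f|^{q}$ to a four-valued step function, and you recover the three pieces of $\rho_{p(\cdot)}(f)$ by letting $\lambda$ creep up to the thresholds $1/2$, $1$, $2$ from below and invoking monotone convergence. The trade-off: you gain a single well-defined witness $q$ (whereas the paper's second choice, $2^{1/q}=\tfrac12\max\{|f|,1\}$, actually forces $q\le 0$ on $\{0<|f|\le 2\}$ and needs an ad hoc patch there), and you lose the one-shot evaluation in favor of a limiting argument. Both proofs share the same unstated convention that the hypothesis $|f|^{q(\cdot)}\in\weakL^{p(\cdot)/q(\cdot)}$ is read via the modular form \eqref{eq:weak-lp-modular} even though $p/q$ is unbounded for the chosen $q$ (so that the norm--modular equivalence of \eqref{eq:mod-norm} is not literally available); your write-up inherits this from the paper and is no worse off for it. The forward direction and the constant-$q$ case are handled essentially as in the paper.
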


\begin{proof}
If $f\in L^\px(\Rn)$, then 
\[
\rho_{\frac{\px}{\qx}}\big(\lambda \chi_{\{|f|^\qx>\lambda\}}\big)
\le 
\rho_{\frac{\px}{\qx}}\big(|f|^\qx\big)
=
\rho_\px(f),
\]
so $|f|^\qx\in \weakL^\frac\px\qx(\Rn)$.

Conversely, let $f$ be such that $|f|^\qx\in \weakL^\frac\px\qx(\Rn)$ for every function $q:\Rn\to (0,\infty)$. Define $q:\Rn\to (0,\infty)$
such that 
\[
(\tfrac12)^\frac1{q(x)}= \tfrac12 \min\{|f(x)|,1\}
\]
for $|f(x)|>0$ and set $q=1$ in $\{f=0\}$.
Let $\lambda = \frac12$ and note that
$\{|f|^\qx>\lambda\} = \{|f|>0\}$. 
Then we find that
\begin{align*}
\rho_{\frac{\px}{\qx}}\big(\lambda \chi_{\{|f|^\qx>\lambda\}}\big)
& =
\int_{\{|f|^\qx>\lambda\}} \lambda^{\frac{p(x)}{q(x)}}\, dx 
=
\int_{\{|f|>0\}} 2^{-p(x)} \min\{|f|,1\}^{p(x)}\, dx\\
& \ge
2^{-p^+}\!\! \int_{\Rn} \min\{|f|,1\}^{p(x)}\, dx
\ge 
2^{-p^+} \rho_\px(f \, \chi_{\{|f|\le 1\}}).
\end{align*}
Hence by the definition of the weak
space we obtain that $\rho_\px(f \, \chi_{\{|f|\le 1\}})$ is finite.

To estimate large values of $f$, let $q:\Rn\to (0,\infty)$ be
such that 
\[
2^\frac1{q(x)}= \tfrac12 \max\{|f(x)|,1\}.
\]
Let $\lambda = 2$ and note that
$\{|f|^\qx>\lambda\} \supset \{f\ge 1\}$. Now by a similar calculation as above, we conclude that 
$\rho_\px(f \, \chi_{\{|f|\ge 1\}})$ is finite. 
Thus $f\in L^\px(\Rn)$.

The last claim, regarding the case of $q$ constant, follows from a change of variables:
\[
%\| f\|_\weakLpx^q =
\Big(\sup_{\lambda>0} \lambda\, \| \chi_{\{|f|> \lambda\}}\|_\px\Big)^q
= \sup_{\lambda>0} \lambda\, \| \chi_{\{|f|> \lambda^\frac1q\}}\|_\px^q
= \sup_{\lambda>0} \lambda\, \| \chi_{\{|f|^q> \lambda\}}\|_{\frac\px q}. \qedhere
%= \| |f|^q\|_{\weakL^{\frac\px q}}. \qedhere
\]
\end{proof}

%%%%%%%%%%%%%%%%%%%%%%%%%%%%%%%%%%%%%%
%%%%%%%%%%%%%%%%%%%%%%%%%%%%%%%%%%%%%%
%%%%%%%%%%%%%%%%%%%%%%%%%%%%%%%%%%%%%%

\section{Strong-to-weak estimates for the Riesz potential}

Let $\alpha:\Omega \to \R$ be log-H\"older continuous with
$0<\alpha^-\leq\alpha^+<n$. We consider the Riesz potential
\[
    \I_{\ax} f(x) := \int_\Omega \frac{|f(y)|}{|x-y|^{n-\alpha (y)}} \, dy
\]
in $\Omega$, and write
\[
p^\#_\alpha (x) := \frac{np(x)}{n-\alpha(x) p(x)}.
\]
Because $\Omega$ is bounded and $\alpha$ is $\log$-H\"older continuous we observe as in \cite[p.\ 270]{Has09} that
$\I_{\ax} f(x)$ and
\[
\I_{\alpha(x)}f(x) = \int_\Omega \frac{|f(y)|}{|x-y|^{n-\alpha(x)}} \, dy
\]
are pointwise equivalent. Thus we obtain the following result from
\cite[Proposition~6.1.6]{DieHHR11}.

\begin{proposition}\label{Jlem}
Let $p \in \PPln (\Omega)$, $\alpha\in\PPzln(\Omega)$ and $(\alpha p)^+<n$. Then
\[
\I_\ax f(x) \lesssim
\left [ \M f (x)\right ]^{1 - \tfrac{\alpha(x)p(x)}{n}}.
\]
for every $f \in L^{p(\cdot)}(\Omega)$ with $\|f\|_{p(\cdot)} \le 1$.
\end{proposition}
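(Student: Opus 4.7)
The plan is to follow the classical Hedberg trick (splitting the Riesz integral at a radius $R$ to be optimized), adapted to the variable exponent setting, and then reduce to the cited result from \cite{DieHHR11}. Since the statement already records that $\I_{\ax} f(x)$ and $\I_{\alpha(x)} f(x)$ are pointwise comparable (because $\Omega$ is bounded and $\alpha$ is $\log$-H\"older continuous, so $|x-y|^{\alpha(x)-\alpha(y)}\lesssim 1$ on $\Omega\times\Omega$ by \eqref{eq:logH-osc}), it suffices to prove the inequality for $\I_{\alpha(x)}f(x)$.

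For the ``near'' part, I would fix $R>0$ and split
\[
\I_{\alpha(x)}f(x)=\int_{B(x,R)\cap\Omega}\frac{|f(y)|}{|x-y|^{n-\alpha(x)}}\,dy+\int_{\Omega\setminus B(x,R)}\frac{|f(y)|}{|x-y|^{n-\alpha(x)}}\,dy.
\]
Decomposing the first integral into dyadic annuli $B(x,2^{-k}R)\setminus B(x,2^{-k-1}R)$, one gets the standard bound $\lesssim R^{\alpha(x)}\M f(x)$, with a constant depending only on $\alpha^\pm$ and $n$.

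For the ``far'' part, I would apply H\"older's inequality in $L^{\px}(\Omega)$, using $\|f\|_\px\le 1$, to reduce matters to estimating
\[
\bignorm{|x-\cdot|^{-(n-\alpha(x))}\chi_{\Omega\setminus B(x,R)}}_{L^{p'(\cdot)}(\Omega)}.
\]
Here is the main obstacle: one must show this norm is bounded by $R^{\alpha(x)-n/p(x)}$ uniformly in $x\in\Omega$, which in the variable exponent setting requires replacing $p(y)$ by $p(x)$ in the relevant modular. This is exactly where the $\log$-H\"older continuity of $p$ (and of $\alpha$) enters, via \eqref{eq:logH-osc} and a careful estimate of $\int_{\Omega\setminus B(x,R)}|x-y|^{-(n-\alpha(x))p'(y)}\,dy$; the hypothesis $(\alpha p)^+<n$ guarantees integrability.

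Once both parts are controlled by $R^{\alpha(x)}\M f(x)$ and $R^{\alpha(x)-n/p(x)}$ respectively, I would optimize by choosing $R$ so that the two terms are equal, i.e., $R^{n/p(x)}\approx [\M f(x)]^{-1}$, which yields the pointwise bound $\I_{\alpha(x)}f(x)\lesssim[\M f(x)]^{1-\alpha(x)p(x)/n}$ on the set where $\M f(x)\ge 1$ (the complementary case being trivial since $\I_{\alpha(x)}f$ is bounded for $\|f\|_{\px}\le 1$, using boundedness of $\Omega$). In practice, since all of this machinery is already developed in \cite[Proposition~6.1.6]{DieHHR11} for $\I_{\alpha(x)}$ under precisely the hypotheses stated, the cleanest write-up is simply to invoke that reference after recording the pointwise equivalence $\I_{\ax}f\approx\I_{\alpha(x)}f$.
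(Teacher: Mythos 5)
Your proposal is correct and matches the paper's approach: both reduce to $\I_{\alpha(x)}f$ via the pointwise equivalence $\I_{\ax}f\approx\I_{\alpha(x)}f$ (using boundedness of $\Omega$ and $\log$-H\"older continuity of $\alpha$) and then invoke \cite[Proposition~6.1.6]{DieHHR11}, which the paper cites directly. Your sketch of the Hedberg-trick mechanism inside that reference is accurate but is not written out in the paper, which simply quotes the result.
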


Here $\M$ denotes the Hardy-Littlewood maximal function given by
\[
\M f(x) := \sup_{t>0} |f|_{B(x,t)}:= \sup_{t>0}\frac{1}{|B(x,t)|}\int_{B(x,t)} |f(y)| \, dy.
\]
For a measurable function $f$ and measurable set $B$ we use the notation $f_B$ for the mean integral of $f$ over $B$.

We also need the following Jensen-type inequality.
The lemma is a restatement of \cite[Theorem~4.2.4]{DieHHR11}
in our current notation,
cf.\ also the proof of Lemma~4.3.6 in the same source.

\begin{lemma}\label{lem1H}
Let $A\subset \Rn$ be measurable and $p \in \PPln(A)$.
If $f\in L^{p(\cdot)}(A)$ and $\Vert f\Vert_{p(\cdot)} \le 1$, then
\[
(|f|_B)^{p(x)} \lesssim \big(|f|^\px + h\big)_B
\]
for every $x\in A$ and every ball $B\subset A$ containing $x$, where
$h\in \wL^1(A)\cap L^\infty(A)$.
\end{lemma}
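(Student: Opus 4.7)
The lemma is a variable exponent Jensen-type inequality of the sort systematically developed in \cite[Ch.~4]{DieHHR11}. The strategy is: split $f$ at the threshold $1$, apply Jensen's inequality with the constant exponent $p^-_B$ on each piece, and use the $\log$-H\"older condition on $p$ to bound the resulting error from replacing $p(x)$ by $p^-_B$.

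Concretely, I would write $f=f_1+f_2$ with $f_1:=f\chi_{\{|f|>1\}}$ and $f_2:=f-f_1$, so that by convexity
\[
(|f|_B)^{p(x)}\lesssim (|f_1|_B)^{p(x)}+(|f_2|_B)^{p(x)}.
\]
For the large part, $\|f\|_{\px}\le 1$ and $|f_1(y)|\le |f(y)|^{p(y)}$ on $\{|f|>1\}$ give $|f_1|_B\le (|f|^{\px})_B\le 1/|B|$, so \eqref{eq:logH-osc} yields $(|f_1|_B)^{p(x)-p^-_B}\lesssim |B|^{-(p^+_B-p^-_B)}\lesssim 1$ in the nontrivial case $|f_1|_B\ge 1$. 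Jensen's inequality with exponent $p^-_B\ge 1$ together with the pointwise bound $|f_1|^{p^-_B}\le |f|^{p(y)}$ on the support of $f_1$ then produces $(|f_1|_B)^{p(x)}\lesssim (|f|^{\px})_B$. For the small part, $|f_2|_B\le 1$ and Jensen give $(|f_2|_B)^{p(x)}\le (|f_2|_B)^{p^-_B}\le (|f_2|^{p^-_B})_B$, reducing matters to controlling the nonnegative residual $|f_2|^{p^-_B}-|f_2|^{p(y)}$.

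The main obstacle is that since $p^-_B\le p(y)$ and $|f_2|\le 1$, this residual cannot simply be dropped, and must be absorbed by $h$. I would exploit the $\log$-H\"older decay of $p$ at infinity by taking $h(y):=(e+|y|)^{-m}$ for $m>n$, so that $h\in L^1(\R^n)\cap L^\infty(\R^n)\subset \wL^1(\R^n)\cap L^\infty(\R^n)$. Splitting the $y$-integration over $B$ at the auxiliary level $\delta(y):=h(y)^{1/(mp^+)}$, on $\{|f_2|\ge\delta\}$ the identity $|f_2|^{p^-_B}=|f_2|^{p(y)}\cdot|f_2|^{-(p(y)-p^-_B)}$ combined with the $\log$-H\"older estimates on $p(y)-p^-_B$ (using \eqref{eq:logH-osc} for balls of small diameter and the decay condition at infinity for large or distant balls) makes the auxiliary factor uniformly bounded, so $|f_2|^{p^-_B}\lesssim |f_2|^{p(y)}$; on $\{|f_2|<\delta\}$ one simply estimates $|f_2|^{p^-_B}\le \delta^{p^-}\lesssim h$. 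Averaging the resulting pointwise inequality over $B$ and combining with the bound for $f_1$ yields $(|f|_B)^{p(x)}\lesssim (|f|^{\px}+h)_B$, as required.
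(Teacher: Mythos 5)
The paper itself does not prove Lemma~\ref{lem1H}; it is stated as a restatement of \cite[Theorem~4.2.4]{DieHHR11} (see also Lemma~4.3.6 there). Your overall strategy --- split $f$ at the threshold $1$, bound the large part via the modular estimate $(|f|^{\px})_B\le 1/|B|$ together with \eqref{eq:logH-osc}, and absorb the small-part error into $h$ --- is in the same spirit, and the treatment of $f_1$ is correct.

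The small-part argument, however, has a genuine gap at both halves of your splitting. On $\{|f_2|<\delta\}$ with $\delta(y)=h(y)^{1/(mp^+)}=(e+|y|)^{-1/p^+}$, the bound $\delta^{p^-}=(e+|y|)^{-p^-/p^+}\lesssim h(y)=(e+|y|)^{-m}$ is simply false for large $|y|$: since $p^-/p^+\le 1<n<m$, the left-hand side decays strictly more slowly than $h$. On $\{|f_2|\ge\delta\}$, the factor $|f_2|^{-(p(y)-p^-_B)}\le(e+|y|)^{(p(y)-p^-_B)/p^+}$ is \emph{not} uniformly bounded over all balls $B$ and $y\in B$. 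A concrete counterexample: take $p(z)=p_\infty-c'/\log(e+|z|)$ with $c'\in(0,p_\infty-1)$, which is $\log$-H\"older continuous with the correct decay. For $B=B(0,R)$ one has $p^-_B=p(0)=p_\infty-c'$, and choosing $y\in B$ with $|y|\approx R$ gives $p(y)-p^-_B\to c'>0$ as $R\to\infty$ while $\log(e+|y|)\to\infty$, so $(p(y)-p^-_B)\log(e+|y|)$ is unbounded. The structural reason is that the minimizer $z_0$ of $p$ over $B$ need bear no relation to $y$: for $|y|$ large and $|z_0|$ small, the decay condition bounds $p(y)-p(z_0)$ only by $c'/\log(e+|y|)+c'/\log(e+|z_0|)$, and the second summand is not compensated by $\log(e+|y|)$. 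This is precisely the difficulty that the key estimate in \cite[Lemma~4.2.1 and Theorem~4.2.4]{DieHHR11} is engineered to circumvent; there one compares $p(x)$ and $p(y)$ directly rather than via $p^-_B$, and the case distinction is organized so that the relevant logarithm is always evaluated at the larger of the two moduli. As written, your argument for $f_2$ does not close.
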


The next statement shows that the Riesz potentials behave as expected in the  variable exponent weak space. We will use the exponent $q$ to overcome the difficulty illustrated in Proposition~\ref{prop:notWorking}.

\begin{theorem}\label{thm:R-L1-to-weak}
Suppose that $p \in \PPln(\Omega)$, $\alpha \in \PPzln(\Omega)$ and $(\alpha
p)^+< n$. If $f\in L^\px(\Omega)$, then
$(\I_\ax f)^\qx \in \wL^{p^\#_\alpha(\cdot)/\qx}(\Omega)$ for every $q \in \PPzln(\Omega)$.
\end{theorem}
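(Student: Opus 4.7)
The plan is as follows. By homogeneity I normalize so that $\|f\|_\px\le 1$ and write $r(x):=p^\#_\alpha(x)/q(x)$; by \eqref{eq:weak-lp-modular2} the conclusion reduces to the uniform modular bound
\[
\sup_{\lambda>0}\int_{\{(\I_\ax f)^{q(x)}>\lambda\}}\lambda^{r(x)}\,dx\lesssim 1.
\]
The starting ingredient is the pointwise Hedberg-type estimate of Proposition~\ref{Jlem}, $\I_\ax f(x) \lesssim (\M f(x))^{1-\alpha(x)p(x)/n}$. Raising to $q(x)$ and absorbing constants into $\lambda$, the task reduces to bounding $\int_{\{(\M f)^{s(x)}>\lambda\}}\lambda^{r(x)}\,dx$ uniformly in $\lambda$, where $s(x):=q(x)(1-\alpha(x)p(x)/n)$. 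A direct computation yields the key exponent identity $s(x)\,r(x)=p(x)$.

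On the level set $\{\M f(x)>\lambda^{1/s(x)}\}$, this identity gives the pointwise domination $\lambda^{r(x)}=(\lambda^{1/s(x)})^{p(x)}\le(\M f(x))^{p(x)}$. Next I would apply Lemma~\ref{lem1H} to a ball nearly realizing the supremum in $\M f(x)$ to obtain the Diening-type inequality $(\M f(x))^{p(x)}\lesssim \M(|f|^{p(\cdot)}+h)(x)=:G(x)$, where $h\in\wL^1\cap L^\infty$ has controlled norm. Since $\||f|^{p(\cdot)}\|_1=\varrho_\px(f)\le 1$, the weak $(1,1)$ inequality for $\M$ places $G\in\wL^1(\Omega)$ with $\|G\|_{\wL^1}\lesssim 1$; hence on the level set $\lambda^{r(x)}\lesssim G(x)$, and the problem reduces to bounding $\int_{\{G(x)\gtrsim\lambda^{r(x)}\}}\lambda^{r(x)}\,dx$ uniformly in $\lambda$.

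The range $\lambda\le 1$ is trivial from $|\Omega|<\infty$. For $\lambda>1$, the main obstacle is the $x$-dependence of the threshold $\lambda^{r(x)}$: as Proposition~\ref{prop:notWorking} makes precise, a scalar weak $L^1$ bound for $G$ cannot by itself yield a variable-exponent weak estimate. The plan for this step is to exploit the log-H\"older continuity of $r$ (inherited from that of $p,q,\alpha$) to freeze the threshold on small scales: cover $\Omega$ by balls $B$ of radius $\sim\lambda^{-c}$ chosen so that \eqref{eq:logH-osc} gives $|B|^{r^-_B-r^+_B}\lesssim 1$ and hence $\lambda^{r(x)}$ is comparable to a single representative value on $B$. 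On each ball the variable-threshold integral is then controlled by a constant-threshold weak $(1,1)$ estimate for $G$, and the identity $s\,r=p$ is used to arrange that the $\lambda$-powers cancel when the contributions over the covering are summed. This log-H\"older freezing and summation procedure is the principal technical difficulty, and it is precisely what forces the hypothesis $q\in\PPzln$ rather than merely $q$ bounded.
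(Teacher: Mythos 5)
Your opening moves track the paper closely: the reduction via \eqref{eq:weak-lp-modular2}, the use of Proposition~\ref{Jlem}, and the exponent identity $s(x)r(x)=p(x)$ are exactly what the paper exploits (the paper writes the exponent as $pq/p^\#_\alpha$, your $s$, but the algebra is the same). The divergence occurs when you take the supremum over balls and pass to $G:=\M(|f|^{p(\cdot)}+h)$. That step discards the strong $L^1$ control $\int_\Omega |f|^{p(\cdot)}+h\,dx\lesssim 1$ in favour of the much weaker scalar bound $\|G\|_{\wL^1}\lesssim 1$, and nothing in your subsequent plan recovers what was lost.

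The problematic step is the ``freezing and summation'': you cover $\Omega$ by $O(\lambda^{cn})$ balls of radius $\sim\lambda^{-c}$, freeze $\lambda^{r(x)}\approx\lambda^{r_i}$ on each, and then want to apply ``a constant-threshold weak $(1,1)$ estimate for $G$.'' The global bound $\lambda^{r_i}\,|\{G>\lambda^{r_i}\}|\le\|G\|_{\wL^1}\lesssim 1$ gives $1$ per ball and hence $\lambda^{cn}$ in total, which is unbounded; the cruder bound $\int_{\{G\gtrsim\lambda^{r(x)}\}}\lambda^{r(x)}dx\le\lambda^{r^+-r^-}\|G\|_{\wL^1}$ is likewise unbounded. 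The assertion that ``$s\,r=p$ is used to arrange that the $\lambda$-powers cancel'' is not a step one can carry out from the ingredients you have assembled: after passing to $G\in\wL^1$ there is no $\lambda$-power to cancel, only a fixed threshold. One could try to repair this by replacing the global $\wL^1$ bound on $G$ with a \emph{localized} weak $(1,1)$ estimate (decomposing $\M$ into $\M_{2B_i}$ plus a tail controlled by $\|\,|f|^{p(\cdot)}+h\,\|_1/|B_i|\approx\lambda^{cn}$, so that $\lambda^{r_i}|B_i\cap\{\M_{2B_i}>\lambda^{r_i}\}|\lesssim\int_{2B_i}(|f|^{p(\cdot)}+h)\,dx$, which sums by bounded overlap), but this requires $c<r^-/n$ to absorb the tail and is not what you described.

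The paper avoids the whole difficulty by \emph{not} supping out the ball. For each $x$ in the Hedberg level set $E$, it fixes a ball $B_x$ with $|f|_{B_x}\gtrsim t^{1/r(x)}$ and notes the crucial fact $|f|_{B_x}\lesssim|B_x|^{-1}$ (since $\|f\|_1\lesssim\|f\|_{p(\cdot)}\le 1$). The oscillation control then comes directly from \eqref{eq:logH-osc}: for $y\in B_x$, $(1+|f|_{B_x})^{r(x)-r(y)}\le(1+|B_x|^{-1})^{r(x)-r(y)}\lesssim 1$. In other words, the freezing scale is set by the average $|f|_{B_x}$ rather than by $\lambda$, so the ball is automatically adapted to the level set. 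Besicovitch then extracts a boundedly overlapping subfamily $(B_i)$ covering $E$, Lemma~\ref{lem1H} is applied ball by ball to give $(1+|f|_{B_i})^{p(x)}\lesssim 1+(|f|^{p(\cdot)}+h)_{B_i}$, and the sum over $i$ is closed using the \emph{strong} $L^1$ bound $\int_\Omega|f|^{p(\cdot)}+h\,dx\lesssim 1$ together with bounded overlap. No $\lambda^{-c}$-scaled covering, no global vs.\ local maximal function decomposition, and no weak $(1,1)$ estimate is needed. I would suggest retracting the step that forms $G$ and instead keeping the realizing ball $B_x$ in hand, following the pointwise bound $|f|_{B_x}\lesssim|B_x|^{-1}$ to the oscillation estimate, and only then invoking Lemma~\ref{lem1H} under the Besicovitch cover.
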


\begin{proof}
  By \eqref{eq:weak-lp-modular}, it is enough to show that for every
  $f\in L^{p(\cdot)}(\Omega)$ with $\Vert f\Vert_{p(\cdot)} \le 1$ and
  every $t>0$ we have
\[
    \int_{\{(\I_\ax f)^\qx >t\}} t^{p^\#_\alpha (x)/q(x)} dx \lesssim 1.
\]

By Proposition~\ref{Jlem}, for a suitable $c>0$,
\[
\Big\{(\I_\ax f(x))^{q(x)} >t\Big\}
\subset \Big\{c\, [\M f(x)]^{\tfrac{p(x)q(x)}{p^\#_\alpha(x)}} >t\Big\}=:E.
\]
By the definition of the maximal function, for every $x\in E$ we may
choose $B_x:= B(x,r_x)$ such that $\displaystyle
c\,(|f|_{B_x})^{\tfrac{p(x)q(x)}{p^\#_\alpha(x)}} > t$.  Since
$\|f\|_1\lesssim \| f\|_{p(\cdot)} \le 1$ we get $|f|_{B_x} \lesssim
|B_x|^{-1}$.  Denote $r:={pq}/{p^\#_\alpha}$. Then
%by $\log$-H\"older continuity,
\[
t\lesssim |f|_{B_x}^{r(x)}
\le
(1+|f|_{B_x})^{r(x)}
= (1+|f|_{B_x})^{r(y)} (1+|f|_{B_x})^{r(x)-r(y)},
\]
where $y \in B_x$. If $r(x)-r(y) \le 0$, then
$(1+|f|_{B_x})^{r(x)-r(y)}\le 1$. If $r(x)-r(y) >0$, then we obtain by
$\log$-H\"older continuity (see \eqref{eq:logH-osc}) that
\[
(1+|f|_{B_x})^{r(x)-r(y)} \le (1+ |B_x|^{-1})^{r(x)-r(y)}
\le 2^{p^+ q^+}\big(1+ |B_x|^{r(y)-r(x)}\big) \lesssim 1.
\]
Hence we have for every $y \in B_x$ that
\[
t\lesssim  (1+|f|_{B_x})^{r(y)}.
\]

By the Besicovitch covering theorem there is a countable covering
subfamily $(B_i)$ of $\{B_x\}$ with bounded overlap.
Thus we obtain by Lemma~\ref{lem1H} that
\[
\begin{split}
\int_E t^{p^\#_\alpha (x)/q(x)} dx
&\le \sum_i \int_{B_i} t^{p^\#_\alpha (x)/q(x)} dx
\lesssim \sum_i \int_{B_i}  (1+|f|_{B_i})^{p(x)} dx\\
&\lesssim \sum_i \bigg(\int_{B_i} |f(y)|^{p(y)}
+h(y) \, dy + |B_i|\bigg)\lesssim 1.
%\qquad\qquad
\qedhere
\end{split}
\]
\end{proof}

%%%%%%%%%%%%%%%%%%%%%%%%%%%%%%%%%%%%%%%%%%%%%%%%%%%%%%%%
%%%%%%%%%%%%%%%%%%%%%%%%%%%%%%%%%%%%%%%%%%%%%%%%%%%%%%%%
%%%%%%%%%%%%%%%%%%%%%%%%%%%%%%%%%%%%%%%%%%%%%%%%%%%%%%%%

\section{Real interpolation and weak Lebesgue spaces}

It is well known that real interpolation between the spaces $L^p$ and $L^\infty$ gives a weak Le\-besgue space in the limiting situation when the second interpolation parameter equals $\infty$. We shall prove that the same holds in the variable exponent setting.

We recall that, for $0<\theta <1$ and $0<q\leq \infty$, the
\textit{interpolation space $(A_0,A_1)_{\theta,q}$} is formed from compatible
quasi-normed spaces $A_0$ and $A_1$ by defining a norm as follows.
For $a\in A_0 + A_1$ we set
\[
\| a \|_{(A_0,A_1)_{\theta,q}}:=
\begin{cases}\displaystyle
\bigg( \int_0^\infty \big[ t^{-\theta} K(t,a) \big]^q\,\frac{dt}{t} \bigg)^{1/q} &\text{when } q<\infty, \\
\displaystyle
\sup_{t>0} t^{-\theta} K(t,a)
&\text{when } q=\infty.
\end{cases}
\]
Here the \textit{Peetre $K$-functional} is given by
$$
K(t,a):=K(t,a;A_0,A_1):=
\inf\limits_{a_0+a_1=a \atop a_0\in A_0,a_1\in
A_1} \big( \|a_0\|_{A_0} + t\,\|a_1\|_{A_1} \big), \ \ \ t>0.
$$

We saw in Proposition~\ref{prop:notWorking} that weak $L^\px$-spaces
are not very well behaved. Real interpolation in the variable
exponent setting is even more challenging (cf.\ \cite{AlmH_pp13,KemV_pp13}). 
Fortunately, we can get
quite far with the following special case, whose proof already is
quite complicated.

\begin{theorem}\label{thm:LorentzInfinity}
Let $p\in \PPz(\Rn)$. For $\theta\in(0,1)$,
\[
\big(L^{(1-\theta)\px}(\Rn), L^\infty(\Rn) \big)_{\theta,\infty} = \weakL^\px(\Rn).
\]
\end{theorem}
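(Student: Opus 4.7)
I will prove the two inclusions separately. The classical argument for $(L^q,L^\infty)_{\theta,\infty}=wL^{q/(1-\theta)}$ goes through the non-increasing rearrangement of $f$, which is not well-behaved in variable-exponent weak spaces (witness Proposition~\ref{prop:notWorking}); the bookkeeping must instead be done by direct modular estimates. The delicate point, which I expect to be the main obstacle, is exponent matching: the identity $(1-\theta)p(x)+\theta p(x)=p(x)$ will let a weighted $L^{(1-\theta)\px}$-modular collapse onto the weak $L^\px$-modular of $f$, provided the weights are chosen correctly.

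\emph{Step 1: $\weakL^\px\subset(L^{(1-\theta)\px},L^\infty)_{\theta,\infty}$.} Assume $\|f\|_{\weakL^\px}\le 1$; by \eqref{eq:weak-lp-modular2}, $\int_{\{|f|>\sigma\}}\sigma^{p(x)}\,dx\le 1$ for every $\sigma>0$. For $t>0$ set $\lambda:=t^{-(1-\theta)}$ and split $f=f_0+f_1$ with $f_1:=\mathrm{sign}(f)\min\{|f|,\lambda\}$ and $f_0:=f-f_1$, so $\|f_1\|_\infty=\lambda$. To control $f_0$, decompose its support as $\bigcup_{k\ge 0}E_k$, $E_k:=\{2^k\lambda<|f|\le 2^{k+1}\lambda\}$, and set $\mu:=\lambda^{-\theta/(1-\theta)}=t^\theta$. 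The elementary identity
\[
(2^{k+1}\lambda/\mu)^{(1-\theta)p(x)}=2^{(1-\theta)p(x)}\,2^{-k\theta p(x)}\,(2^k\lambda)^{p(x)}
\]
combined with the weak bound $\int_{E_k}(2^k\lambda)^{p(x)}\,dx\le 1$ (following from $E_k\subset\{|f|>2^k\lambda\}$) yields
\[
\varrho_{(1-\theta)\px}(f_0/\mu)\le 2^{(1-\theta)p^+}\sum_{k\ge 0}2^{-k\theta p^-}<\infty.
\]
By \eqref{eq:mod-norm} this gives $\|f_0\|_{(1-\theta)\px}\lesssim\mu=t^\theta$, and hence $K(t,f)\le\|f_0\|_{(1-\theta)\px}+t\|f_1\|_\infty\lesssim t^\theta$ uniformly in $t$.

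\emph{Step 2: the reverse inclusion.} Suppose $M:=\sup_{t>0}t^{-\theta}K(t,f)<\infty$. Given $\lambda>0$, set $t:=(4M/\lambda)^{1/(1-\theta)}$ and choose a decomposition $f=f_0+f_1$ with $\|f_0\|_{(1-\theta)\px}+t\|f_1\|_\infty\le 2Mt^\theta$; then $\|f_1\|_\infty\le\lambda/2$ and $\|f_0\|_{(1-\theta)\px}\le A$, with $A:=C_*\lambda^{-\theta/(1-\theta)}$ and $C_*$ independent of $\lambda$. Since $\{|f|>\lambda\}\subset\{|f_0|>\lambda/2\}$ and $\mathbf{1}_{\{|f_0|>\lambda/2\}}\le(2|f_0|/\lambda)^{(1-\theta)p(x)}$,
\[
\int_{\{|f|>\lambda\}}\lambda^{p(x)}\,dx\le 2^{(1-\theta)p^+}\int|f_0|^{(1-\theta)p(x)}\lambda^{\theta p(x)}\,dx.
\]
The defining relation for $A$ turns $\lambda^{\theta p(x)}$ into $C_*^{(1-\theta)p(x)}A^{-(1-\theta)p(x)}$, so the last integral equals $\varrho_{(1-\theta)\px}(C_*f_0/A)$; since $\|C_*f_0/A\|_{(1-\theta)\px}\le C_*$, this is bounded uniformly in $\lambda$ by \eqref{eq:mod-norm}. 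The resulting uniform modular bound then gives $f\in\weakL^\px$ via \eqref{eq:weak-lp-modular}.
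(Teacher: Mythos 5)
Your proof is correct and complete. Step 1 ($\weakL^\px\subset X$) is essentially the paper's argument for the corresponding direction: both truncate at level $s=t^{\theta-1}$, scale the tail $f_0=\max\{|f|-s,0\}$ by $t^\theta$, and run a dyadic modular estimate (your $E_k$ versus the paper's $A_k$) to get $K(t,f)\lesssim t^\theta$.

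Step 2 ($X\subset\weakL^\px$) is a genuinely different route. The paper evaluates the $K$-functional in closed form on indicator functions, obtaining $\|\chi_A\|_X=\|\chi_A\|_{p_0(\cdot)}^{1-\theta}=\|\chi_A\|_\px$, and then deduces $\|f\|_X\ge\|\lambda\chi_{\{f>\lambda\}}\|_X$ from the lattice monotonicity of the interpolation norm — a property that requires separate (if routine) justification. You instead work with a general $f$: for each $\lambda$ you pick $t$ so that a near-optimal $K$-decomposition forces $\|f_1\|_\infty\le\lambda/2$, pass to $\{|f|>\lambda\}\subset\{|f_0|>\lambda/2\}$, and convert the weak-$L^\px$ modular of $f$ at level $\lambda$ into the $L^{(1-\theta)\px}$ modular of a fixed multiple of $f_0/A$ via the weight identity $\lambda^{\theta p(x)}=(C_*/A)^{(1-\theta)p(x)}$. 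Since $\|C_*f_0/A\|_{(1-\theta)\px}\le C_*$ independently of $\lambda$, the modular is uniformly bounded. This bypasses both the reduction to indicator functions and the monotonicity of $\|\cdot\|_X$, at the cost of slightly more explicit bookkeeping with the constants. One small tidying remark: to read off the norm inequality $\|f\|_{\weakLpx}\lesssim\|f\|_X$ (rather than mere membership), normalize $M=1$ by homogeneity so that $C_*$ is absolute; as written, the bound you obtain depends polynomially on $M$.
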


\begin{proof}
Denote ${p_0}:=(1-\theta) p$ and $X:=\big(L^{p_0(\cdot)}(\Rn), L^\infty(\Rn)\big)_{\theta,\infty}$.
Then by definition
\[
\| f\|_X
=
\sup_{t>0} t^{-\theta} \inf_{f_0+f_1=f} \big(\|f_0\|_{p_0(\cdot)} + t\,\|f_1\|_\infty \big).
\]
We assume without loss of generality that $f, f_0,f_1\ge 0$.

We start by proving that $\|f\|_X \gtrsim \|f\|_\weakLpx$. Let $\lambda>0$
be such that $\| f\|_\weakLpx < 2 \lambda \| \chi_A\|_\px$ where
$A:=\{f> \lambda\}$. Then it remains to prove the second of the inequalities
\[
\|f\|_X
\ge
\|\lambda\chi_A\|_X
\ge
\|\lambda\chi_A\|_\px
\gtrsim
\| f\|_\weakLpx.
\]
Suppose that $f_0+f_1=f$ and that $\|f_1\|_\infty=s$. Then we see that 
\[
\inf_{f_0=f-f_1} \big(\|f_0\|_{p_0(\cdot)} + t\,\|f_1\|_\infty \big)
=
\big\|f-\min\{f,s\}\big\|_{p_0(\cdot)} + t\,s .
\]
Hence in the definition of $\|f\|_X$ we may take the infimum over $s>0$
and functions $f_1:= \min\{f,s\}$, $f_0:=f-f_1$. Thus we calculate
\begin{align*}
\|\chi_A\|_X
&=
\sup_{t>0} t^{-\theta} \inf_{s\in [0,1]} \big( (1-s) \| \chi_A\|_{p_0(\cdot)} + ts\big) \\
&=
 \sup_{t>0} t^{-\theta} \min\{\| \chi_A\|_{p_0(\cdot)}, t \} \\
&=
\, \| \chi_A\|_{p_0(\cdot)}^{1-\theta}
=
\, \| \chi_A\|_\px.
\end{align*}
This completes the proof of the inequality $\|f\|_X \gtrsim \|f\|_\weakLpx$.

We show next that $\|f\|_X \lesssim \|f\|_\weakLpx$. By homogeneity,
it suffices to consider the case where the right hand side equals one.
Thus by \eqref{eq:weak-lp-modular2} we can assume that
\begin{equation}\label{eq:weakThmAssumption}
1\ge
\int_{\{f>\lambda\}} \lambda^{p(x)}\, dx =
\int_{\{f>\lambda\}} \lambda^\frac{p_0(x)}{1-\theta}\, dx =
\int_{\{f>z^{1-\theta}\}} z^{p_0(x)}\, dx
\end{equation}
for every $\lambda >0$.

Since $f_0=f-\min\{f,s\}=\max\{f,s\}-s=\max\{f-s,0\} $, we need to
prove that
\[
\sup_{t>0} t^{-\theta} \inf_{s>0} \big( \|\max\{f-s, 0\}\|_{p_0(\cdot)} + t s\big)\lesssim1.
\]
We choose $s:= t^{\theta-1}$ so that $t^{-\theta} t s = 1$. Thus it suffices to show
that
\[
\|t^{-\theta} \max\{f-t^{\theta-1},0\}\|_{p_0(\cdot)} \lesssim 1
\]
for all $t>0$. We next note that $\max\{f-t^{\theta-1},0\} \le f
\chi_{\{f>z^{1-\theta}\}}$ with $z:=\frac1t$. Thus by
\eqref{eq:mod-norm}, it suffices to show that
\begin{equation}\label{eq:modularForm}
\int_{\{f>z^{1-\theta}\}} (z^{\theta} f)^{p_0(x)}\, dx \lesssim 1
\end{equation}
for all $z>0$. It is enough to show that the inequality holds for all $z = 2^{k_0}$, $k_0 \in \Z$.

Define
\[
A_k := \big\{x\in \Rn \, | \, f(x) \in (2^{k(1-\theta)}, 2^{(k+1)(1-\theta)}] \big\}\,, \quad k\in\Z.
\]
For $z=2^k$, we observe that $A_k \subset \{f>z^{1-\theta}\}$ and 
thus conclude from \eqref{eq:weakThmAssumption} that
\[
\int_{A_k} 2^{k p_0(x)}\, dx \le 1.
\]
Substituting $z=2^{k_0}$ in \eqref{eq:modularForm},
we find that it is enough to prove that
\[
\sum_{k=k_0}^\infty \int_{A_k} \big(2^{k_0\theta} 2^{(k+1)(1-\theta)}\big)^{p_0(x)}\, dx \lesssim 1
\]
for all $k_0\in \Z$. So we estimate
\[
\int_{A_k} \big(2^{k_0\theta} 2^{(k+1)(1-\theta)}\big)^{p_0(x)}\, dx
\le
\big( 2^{(k_0-k)\theta}\big)^{p_0^-} \int_{A_k}  2^{k p_0(x)}\, dx
\le
2^{(k_0-k)\theta p_0^-}.
\]
Hence it follows that
\[
\sum_{k=k_0}^\infty \int_{A_k} \big( 2^{k_0\theta} 2^{(k+1)(1-\theta)}\big)^{p_0(x)}\, dx
\le
\sum_{k=k_0}^\infty 2^{(k_0-k)\theta p_0^-}
=
\frac{1}{1-2^{-\theta p_0^-}}<\infty,
\]
which is the required upper bound. 
\end{proof}

The following feature is the main property of the the real interpolation
method \cite[Proposition~2.4.1]{Tr1}:
If $T$ is a linear operator which is bounded from
$X_0$ to $Y_0$ and from $X_1$ to $Y_1$, then $T$ is bounded from
\[
(X_0,X_1)_{\theta,q}
\text{ to }
(Y_0,Y_1)_{\theta,q}
\]
for $\theta\in (0,1)$ and $q\in (0,\infty]$.  If simple functions are
dense in the spaces, then the claim holds also for sublinear operators
(cf.\ \cite[Theorem~1.5.11]{BruK91}, or \cite[Corollary~A.5]{DieHN04}
for the variable exponent case; see also \cite[Lemma~4.1]{ACae11} for
a discussion in a general framework).  This, together with
Theorem~\ref{thm:LorentzInfinity} for $X_0=Y_0= L^{\p}(\Rn)$ and
$X_1=Y_1= L^{\infty}(\Rn)$ yields the following corollary.

\begin{corollary}
Assume that $T$ is sublinear,
$T: L^\px (\Rn) \to L^\px(\Rn)$ is bounded, and $T: L^\infty(\Rn)  \to L^\infty(\Rn) $ is bounded.
Then $T: \weakL^{\lambda\px}(\Rn)  \to \weakL^{\lambda\px}(\Rn) $ is bounded for every $\lambda>1$.
\end{corollary}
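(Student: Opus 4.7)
The plan is to derive this corollary directly from Theorem~\ref{thm:LorentzInfinity} together with the interpolation principle stated immediately before the corollary statement. Given $\lambda>1$, the first step is to choose $\theta\in(0,1)$ so that $\weakL^{\lambda p(\cdot)}(\Rn)$ arises as the interpolation space with endpoints $L^{p(\cdot)}(\Rn)$ and $L^{\infty}(\Rn)$. Setting $\theta:=\frac{\lambda-1}{\lambda}\in(0,1)$ yields $(1-\theta)\lambda=1$, hence $(1-\theta)\lambda p(\cdot)=p(\cdot)$.

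Next, I would apply Theorem~\ref{thm:LorentzInfinity} with the variable exponent $\lambda p(\cdot)\in\PPz(\Rn)$ in place of $p(\cdot)$ (note that $\lambda p(\cdot)$ is a valid variable exponent because $0<\lambda p^-\leq \lambda p^+<\infty$). This gives the identification
\[
\weakL^{\lambda p(\cdot)}(\Rn) \;=\; \bigl(L^{(1-\theta)\lambda p(\cdot)}(\Rn),\, L^{\infty}(\Rn)\bigr)_{\theta,\infty} \;=\; \bigl(L^{p(\cdot)}(\Rn),\, L^{\infty}(\Rn)\bigr)_{\theta,\infty},
\]
with equivalent norms. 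Now both hypotheses of the interpolation statement recalled before the corollary are satisfied, with $X_0=Y_0=L^{p(\cdot)}(\Rn)$ and $X_1=Y_1=L^{\infty}(\Rn)$: the operator $T$ is sublinear and bounded on each endpoint space. Therefore $T$ is bounded on $(L^{p(\cdot)}(\Rn),L^{\infty}(\Rn))_{\theta,\infty}$, which by the displayed identification is exactly $\weakL^{\lambda p(\cdot)}(\Rn)$.

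The only point that needs care is the density condition required to extend the interpolation property from linear to sublinear operators. Since $p^+<\infty$, simple functions are dense in $L^{p(\cdot)}(\Rn)$, and this is enough to invoke the sublinear version from \cite[Theorem~1.5.11]{BruK91} or \cite[Corollary~A.5]{DieHN04}, as cited by the authors. This is the main technical point, but it is standard in the cited references and requires no new argument here. The rest is just a matter of choosing $\theta$ correctly and reading off the conclusion.
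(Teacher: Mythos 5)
Your proof is correct and follows the same route as the paper: you identify $\weakL^{\lambda\px}(\Rn)$ as the interpolation space $(L^{\px}(\Rn),L^{\infty}(\Rn))_{\theta,\infty}$ via Theorem~\ref{thm:LorentzInfinity} (by the explicit choice $\theta=(\lambda-1)/\lambda$, which the paper leaves implicit), and then invoke the sublinear interpolation property with $X_0=Y_0=L^{\px}(\Rn)$ and $X_1=Y_1=L^{\infty}(\Rn)$, noting the density of simple functions in $L^{\px}(\Rn)$ as required for the sublinear case. This matches the authors' argument.
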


L.\ Diening has shown that the boundedness of $\M: L^\px (\Rn) \to L^\px(\Rn)$
implies the boundedness of $\M: L^{s \px} (\Rn) \to L^{s \px}(\Rn)$ for some
$s<1$ \cite[Theorem~5.7.2]{DieHHR11}. Furthermore, it is known that
the maximal operator is bounded on $L^\px(\Rn)$ when
$p\in \PPln(\Rn) $ and $p^->1$ \cite[Theorem~4.3.8]{DieHHR11}. In view of the
previous result these facts immediately imply Theorem~\ref{thm:strongWeakM}.

%%%%%%%%%%%%%%%%%%%%%%%%%%%%%%%%%%%%%%
%%%%%%%%%%%%%%%%%%%%%%%%%%%%%%%%%%%%%%
%%%%%%%%%%%%%%%%%%%%%%%%%%%%%%%%%%%%%%

\section{Weak-to-weak estimates for the Riesz potential}

As usual, we denote by $p'$
the H\"older conjugate exponent of $p$, taken in a
point-wise sense, $1/p(x) + 1/p'(x) =1$. Following Diening (and \cite{DieHHR11}),
for exponents we use the notation $p_B$ to denote the
\textit{harmonic} mean of $p$ over the measurable set $B$,
\[
p_B := \bigg(\fint_B \frac1{p(x)} \, dx\bigg)^{-1}.
\]
The following claim is proved as part of the proof of
\cite[Lemma~6.1.5]{DieHHR11}.

\begin{lemma}\label{lem:norm}
Let $p\in \PPln(\Rn)$ with $1< p^-\le p^+ < \frac{n}{\alpha}$ for $\alpha\in (0,n)$. Then
\[
\big\| \, |x-\cdot|^{\alpha-n} \chi_{\Rn\setminus B} \big\|_{L^{p'(\cdot)}(\Rn)} \approx
|B|^{-\frac1{(p_\alpha^\#)_B}}
\]
where $B$ is a ball centered at $x\in \Rn$.
\end{lemma}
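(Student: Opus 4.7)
The plan is to reduce the norm identity to an equivalent modular estimate and then compute the modular on dyadic annuli. Writing $r := |B|^{1/n}$ and using that the reciprocal commutes with the harmonic mean, we get $1/(p^\#_\alpha)_B = 1/p_B - \alpha/n$, so the target right-hand side equals $\lambda := r^{\alpha - n/p_B}$. Since $1 < p^- \le p^+ < n/\alpha$, the conjugate exponent $p'$ is also in $\PPln(\R^n)$ and satisfies $(n/\alpha)' < (p')^- \le (p')^+ < \infty$, so by the norm--modular relation \eqref{eq:mod-norm} it is enough to prove that
\[
\int_{\R^n\setminus B} \Bigl(\frac{|x-y|^{\alpha-n}}{\lambda}\Bigr)^{p'(y)}\,dy \approx 1,
\]
with both implicit constants independent of $x$ and of the radius $r$.

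The natural approach is to decompose $\R^n \setminus B$ into the dyadic annuli $A_k := B(x, 2^{k+1}r)\setminus B(x, 2^k r)$, $k\ge 0$, and handle each annulus separately. On $A_k$ we have $|x-y|\approx 2^k r$ and $|A_k|\approx (2^k r)^n$, so the contribution of $A_k$ to the modular is comparable to $(2^k r)^n$ times the integrand at any representative point---provided we can replace the variable exponent $p'(y)$ by a constant. This is where log-H\"older continuity enters: using \eqref{eq:logH-osc} for $p'$ on $B_k := B(x, 2^{k+1}r)$ (combined with the log-H\"older decay condition when $2^k r$ is large), I would freeze $p'(y) \mapsto p'_{B_k}$ at the cost of multiplicative constants. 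A parallel application to the exponent $p$ on $B$ and on $B_k$ allows replacement of $p_B$ by $p_{B_k}$ in the definition of $\lambda$, again at controlled cost.

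After these two freezing steps the integrand on $A_k$ is, up to constants, a single power of $(2^k r)^{(\alpha - n)p'_{B_k}}/r^{(\alpha - n/p_{B_k})p'_{B_k}}$, and the corresponding integral is of order $2^{k(n-(n-\alpha)p'_{B_k})}$. The hypothesis $p^+ < n/\alpha$ forces $(n-\alpha)p'_{B_k} \ge (n-\alpha)(p^+)' > n$ uniformly in $k$, so the exponent on $2^k$ is strictly negative and bounded away from zero. Summing the resulting geometric series in $k \ge 0$ gives the upper bound $\lesssim 1$. The matching lower bound is produced by discarding all annuli except $A_0$ and running the same computation once.

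The principal obstacle is the bookkeeping needed to justify the two exponent-freezing steps uniformly over all admissible balls $B$: the local log-H\"older estimate \eqref{eq:logH-osc} degrades as the ambient ball grows and must be patched with the log-H\"older decay at infinity. Concretely one has to verify that $r^{|p_B - p_{B_k}|} \lesssim 1$ and $(2^k r)^{|p'(y) - p'_{B_k}|} \lesssim 1$ with constants independent of $k$ and of the position of $x$; this is exactly where the global hypothesis $p\in\PPln(\R^n)$, rather than merely local log-H\"older continuity, is needed, and it is the only delicate point in an otherwise routine annular computation.
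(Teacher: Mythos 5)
The overall plan---reduce to a modular estimate of size $\approx 1$, decompose $\Rn\setminus B$ into dyadic annuli $A_k$, freeze the exponent on each annulus, and sum the resulting geometric-like series using $(n-\alpha)(p')^- > n$---is the right framework, and the arithmetic showing that the $r$-dependence cancels exactly after freezing both $p'(y)$ and $p_B$ to their $B_k$-values is correct. (The paper itself gives no proof, only a citation to \cite[Lemma~6.1.5]{DieHHR11}, so you are on your own here.) However, the second uniformity claim you flag as ``the only delicate point,'' namely $(2^k r)^{|p'(y)-p'_{B_k}|}\lesssim 1$ for all $y\in B_k$, is \emph{false} in general, and this is a genuine gap, not bookkeeping. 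When $2^k r>1$ the local oscillation estimate \eqref{eq:logH-osc} no longer applies to $B_k$, and the log-H\"older decay controls $|p'(y)-p'_\infty|$ only for $|y|$ large. If, say, $p$ is a compactly supported perturbation of a constant $p_\infty$, then as $k\to\infty$ the harmonic mean $p'_{B_k}$ tends to $p'_\infty$, while $p'(y)$ remains a fixed distance from $p'_\infty$ for $y$ on the perturbation's support (which $B_k$ eventually engulfs). Hence $|p'(y)-p'_{B_k}|$ is bounded below on a set of positive measure while $2^k r\to\infty$, and the claimed bound fails.

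The standard repair is precisely the $h$-function device already present in Lemma~\ref{lem1H} (i.e.\ \cite[Theorem~4.2.4]{DieHHR11}): one does \emph{not} have the pointwise bound $s^{p'(y)}\lesssim s^{p'_{B_k}}$, but only $s^{p'(y)}\lesssim s^{p'_{B_k}}+h(y)$ for a fixed $h\in L^1(\Rn)\cap L^\infty(\Rn)$; the excess set where the pointwise comparison breaks down has finite measure, so integrating $h$ over $\bigcup_k A_k$ contributes only an $O(1)$ error, after which your geometric-series argument closes. Alternatively, one can sidestep pointwise freezing altogether: apply the triangle inequality to the dyadic decomposition, use the two-regime equivalence $\|\chi_Q\|_{p'(\cdot)}\approx |Q|^{1/p'(x)}$ for $|Q|\le 1$, $x\in Q$, and $\approx |Q|^{1/p'_\infty}$ for $|Q|>1$ (\cite[Corollary~4.5.9]{DieHHR11}, cited elsewhere in the paper), and observe that with $p'_{B_k}$ replaced by one of the two fixed values $p'(x)$ or $p'_\infty$ according to the scale $2^{k}r$, the series is genuinely geometric and sums to $\approx r^{\alpha-n/p_B}$. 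Your first uniformity claim, $r^{|p_B-p_{B_k}|}\lesssim 1$, is in fact correct: trivial for $r<1$, and for $r\ge 1$ it follows from the decay condition since averaging the pointwise bound $|1/p(y)-1/p_\infty|\lesssim 1/\log(e+|y|)$ over a ball of radius $r$ gives $|1/p_B-1/p_\infty|\lesssim 1/\log(e+r)$, and likewise for $p_{B_k}$. So the gap is confined to the second claim, and closing it requires one of the standard additional devices above.
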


We next generalize this claim to slightly more general norms, which will
appear below when we estimate in the dual of a weak Lebesgue space.
We need the following auxiliary result.

\begin{lemma}\label{lem:infimum}
For $\alpha,\beta, \delta, t>0$,
\[
\inf_{R\in [\delta,\infty)}
\Big(\tfrac{\alpha}\beta R^{-\beta} + t\, (R^\alpha - \delta^\alpha) \Big)
\approx
\min\Big\{t^{\frac{\beta}{\alpha + \beta}}, \delta^{-\beta} \Big\},
\]
and the infimum occurs at $R<1$ if and only if $t>1$ and $\delta<1$. 
\end{lemma}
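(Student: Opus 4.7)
The plan is to treat this as a one-variable calculus problem. Define
\[
g(R) := \tfrac{\alpha}{\beta} R^{-\beta} + t\,(R^\alpha - \delta^\alpha), \qquad R\ge\delta,
\]
and differentiate: $g'(R) = -\alpha R^{-\beta-1} + t\alpha R^{\alpha-1}$. Setting $g'(R^*)=0$ gives the unique interior critical point $R^* = t^{-1/(\alpha+\beta)}$, and $g$ is strictly decreasing on $(0,R^*)$ and strictly increasing on $(R^*,\infty)$.

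The proof then splits into two cases according to whether $R^*$ lies in the feasible region. In \emph{Case 1}, $R^*\ge \delta$ (equivalently $t\le \delta^{-(\alpha+\beta)}$); the infimum is attained at $R^*$. Substituting,
\[
g(R^*) = \Big(\tfrac{\alpha}{\beta}+1\Big) t^{\beta/(\alpha+\beta)} - t\delta^\alpha = \tfrac{\alpha+\beta}{\beta}\, t^{\beta/(\alpha+\beta)} - t\delta^\alpha.
\]
The condition $t\le \delta^{-(\alpha+\beta)}$ is exactly $t\delta^\alpha \le t^{\beta/(\alpha+\beta)}$, so $g(R^*) \approx t^{\beta/(\alpha+\beta)}$ with constants depending only on $\alpha/\beta$. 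The same condition yields $t^{\beta/(\alpha+\beta)}\le \delta^{-\beta}$, so this matches $\min\{t^{\beta/(\alpha+\beta)},\delta^{-\beta}\} = t^{\beta/(\alpha+\beta)}$. In \emph{Case 2}, $R^*<\delta$ (equivalently $t>\delta^{-(\alpha+\beta)}$); since $g$ is increasing on $[\delta,\infty)$ the infimum equals $g(\delta) = \tfrac{\alpha}{\beta}\,\delta^{-\beta}$, and in this regime $\min\{t^{\beta/(\alpha+\beta)},\delta^{-\beta}\}=\delta^{-\beta}$, giving the asserted equivalence.

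For the last assertion, observe that in Case 1 the minimizer is $R^*=t^{-1/(\alpha+\beta)}$, which satisfies $R^*<1$ if and only if $t>1$; and $R^*\ge\delta$ together with $R^*<1$ forces $\delta<1$. In Case 2 the minimizer is $R=\delta$, so $R<1$ is equivalent to $\delta<1$, and the defining inequality $t>\delta^{-(\alpha+\beta)}>1$ automatically yields $t>1$. Conversely, if $t>1$ and $\delta<1$, then $R^*<1$ in Case 1 and $R=\delta<1$ in Case 2, so the minimizer is always $<1$. No real obstacle is anticipated; the only care needed is in checking that the threshold $t=\delta^{-(\alpha+\beta)}$ simultaneously controls which term dominates the minimum and which endpoint is active.
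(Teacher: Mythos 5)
Your proof is correct and follows essentially the same route as the paper: find the critical point $R^* = t^{-1/(\alpha+\beta)}$, split into the cases $R^*\ge\delta$ and $R^*<\delta$, and evaluate the objective at the constrained minimizer in each case. The only cosmetic differences are that you substitute $R^*$ exactly to get $\tfrac{\alpha+\beta}{\beta}t^{\beta/(\alpha+\beta)}-t\delta^\alpha$ rather than using the paper's one-line bound $0\le t(R^\alpha-\delta^\alpha)\le R^{-\beta}$ at $R=R^*$, and you spell out the final ``minimizer $<1$ iff $t>1$ and $\delta<1$'' equivalence in both cases, which is in fact a bit more careful than the paper's brief remark that covers only the unconstrained minimum.
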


\begin{proof}
Denote $f(R):= \tfrac{\alpha}\beta R^{-\beta} + t\, (R^\alpha - \delta^\alpha)$.
Then $f'(R)=- \alpha R^{-\beta-1} + t \alpha R^{\alpha-1}$, which equals zero when
$R = t^{-1/(\alpha+\beta)} =: R_0$. This is a minimum in $(0, \infty)$, since
$f\to \infty$ at $0$ and $\infty$.
When $R=R_0\ge \delta$, we estimate $0\le t\, (R^\alpha - \delta^\alpha)
\le t R^\alpha = R^{-\beta}$. Hence we conclude that
$f(R_0) \approx R_0^{-\beta} = t^{\beta/(\alpha+\beta)}$.
Also note that the unconstrained minimum occurs for
$R<1$ if and only if $t>1$.

However, if $R_0<\delta$, then the constrained minimum
occurs at $\delta$, in which case $f(\delta) = \tfrac{\alpha}\beta
\delta^{-\beta} \approx \delta^{-\beta}$.
Hence the estimate of the minimum equals
\[
t^{\beta/(\alpha+\beta)} \chi_{\{t^{-1/(\alpha+\beta)} \ge \delta\}}
+
\delta^{-\beta} \chi_{\{t^{-1/(\alpha+\beta)} < \delta\}}
=
\min\Big\{t^{\frac{\beta}{\alpha + \beta}}, \delta^{-\beta} \Big\}.
\qedhere
\]
\end{proof}

\begin{lemma}\label{lem:interpolationNormEstimate}
Let $p\in \PPln(\Rn)$ with $1<p^-\le p^+ < \frac{n}{\alpha}$ for $\alpha\in (0,n)$ and
let $\theta>0$ be so small that the infimum of $r:=(1-\theta)p$ is greater than $1$. Then
\[
\big\| \, |x-\cdot|^{\alpha-n}\chi_{\Rn\setminus B}\big\|_{(L^{r'(\cdot)}(\Rn),L^1(\Rn))_{\theta,1}}
\approx
|B|^{-\frac1{(p^\#_\alpha)_B}}
\]
where $B$ is a ball centered at $x\in \Rn$.
\end{lemma}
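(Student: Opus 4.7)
The plan is to estimate the Peetre $K$-functional $K(t, g; L^{r'(\cdot)}, L^1)$ pointwise in $t$ and then integrate. Write $g(y) := |x-y|^{\alpha-n}\chi_{\Rn\setminus B}(y)$ with $B = B(x,\delta)$, and set $\beta := n/(r^\#_\alpha)_B$. Since $r = (1-\theta)p$ gives $r_B = (1-\theta)p_B$ for the harmonic means, a short computation yields the key algebraic identity $\alpha\theta - \beta(1-\theta) = -n/(p^\#_\alpha)_B$, so the target estimate $|B|^{-1/(p^\#_\alpha)_B}$ is equivalent to $\delta^{\alpha\theta - \beta(1-\theta)}$.

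For the upper bound, I decompose $g = g_{0,R} + g_{1,R}$ for each $R \geq \delta$, with $g_{0,R} := g\chi_{\Rn\setminus B(x,R)}$ and $g_{1,R} := g\chi_{B(x,R)\setminus B}$. Lemma~\ref{lem:norm} applied to the ball $B(x,R)$ gives $\|g_{0,R}\|_{r'(\cdot)} \approx R^{-n/(r^\#_\alpha)_{B(x,R)}}$, which by the oscillation estimate \eqref{eq:logH-osc} is comparable to $R^{-\beta}$ across the scales we need; polar coordinates give $\|g_{1,R}\|_{L^1} \approx R^\alpha - \delta^\alpha$. Applying Lemma~\ref{lem:infimum} (absorbing the $\alpha/\beta$ prefactor into a constant) yields $K(t,g) \lesssim \min\{t^{\beta/(\alpha+\beta)},\, \delta^{-\beta}\}$. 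Splitting $\int_0^\infty t^{-\theta} K(t,g)\,dt/t$ at the crossover $t^* := \delta^{-(\alpha+\beta)}$, both pieces evaluate to $\approx \delta^{\alpha\theta - \beta(1-\theta)}$, which is the required upper bound (and the integral at $0$ converges precisely because $p^+ < n/\alpha$).

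For the lower bound, consider an arbitrary decomposition $g = g_0 + g_1$ and any $R \geq 2\delta$. On the annulus $A_R := B(x,R)\setminus B$, H\"older's inequality gives
\[
\int_{A_R} g \;\leq\; \|g_0\|_{r'(\cdot)}\,\|\chi_{A_R}\|_{r(\cdot)} + \|g_1\|_{L^1},
\]
and the estimate $\|\chi_{A_R}\|_{r(\cdot)} \approx R^{n/r_{A_R}} \lesssim R^{\alpha+\beta}$ (using $n/r = \alpha + n/r^\#_\alpha$ pointwise and log-H\"older continuity), together with $\int_{A_R} g \approx R^\alpha - \delta^\alpha$, gives $\|g_0\|_{r'(\cdot)} R^{\alpha+\beta} + \|g_1\|_{L^1} \gtrsim R^\alpha - \delta^\alpha$. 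Taking $R = 2\delta$ when $t \geq \delta^{-(\alpha+\beta)}$ produces $\|g_0\|_{r'(\cdot)} + t\|g_1\|_{L^1} \gtrsim \delta^{-\beta}$, while taking $R = t^{-1/(\alpha+\beta)}$ (which exceeds $2\delta$ in the complementary range) produces $\|g_0\|_{r'(\cdot)} + t\|g_1\|_{L^1} \gtrsim t^{\beta/(\alpha+\beta)}$. Hence $K(t, g) \gtrsim \min\{t^{\beta/(\alpha+\beta)},\, \delta^{-\beta}\}$, and the same integration as above yields the matching lower bound.

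The main obstacle is the log-H\"older bookkeeping: the harmonic means $(r^\#_\alpha)_{B(x,R)}$ and $r_{A_R}$ depend on the scale $R$, whereas the target exponent $\beta$ is anchored at scale $\delta$. In both the upper and lower bounds the effective range of $R$ is a bounded power of $\delta$ and $t$, so combining \eqref{eq:logH-osc} with the log-H\"older decay condition at infinity shows that replacing these scale-dependent harmonic means by $\beta$ produces only bounded multiplicative errors, which are absorbed into the $\approx$ in the final estimate.
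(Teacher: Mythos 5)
Your overall strategy follows the paper's quite closely: radial truncations $g\chi_{\Rn\setminus B(x,R)}$ and $g\chi_{B(x,R)\setminus B}$ to estimate the $K$-functional, Lemma~\ref{lem:norm} for the tail norm, Lemma~\ref{lem:infimum} for the infimum over $R$, and integration in $t$. The one genuinely different ingredient is the lower bound: you prove $K(t,g)\gtrsim\min\{t^{\beta/(\alpha+\beta)},\delta^{-\beta}\}$ by testing an arbitrary decomposition $g_0+g_1$ against $\int_{A_R}g$ via H\"older duality, whereas the paper first shows that the infimum over all decompositions is comparable to the infimum over ``cut-offs by sets'' $g\chi_A$, $g\chi_{\Rn\setminus A}$, and then observes that for radially decreasing $g$ the optimal $A$ must be a complement of a ball. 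Your route is arguably more elementary and self-contained; the paper's route is more structural and applies verbatim to any monotone function.

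There is, however, a genuine gap in the final paragraph. You claim that replacing the scale-dependent harmonic means $(r^\#_\alpha)_{B(x,R)}$ (resp.\ $(r^\#_\alpha)_{A_R}$) by the fixed exponent $\beta=n/(r^\#_\alpha)_B$ produces only ``bounded multiplicative errors,'' invoking \eqref{eq:logH-osc} together with the decay condition. This is not literally true: as $R\to\infty$ the harmonic mean tends to $(r^\#_\alpha)_\infty$, which in general differs from the value near $x$, and $R^{n/(r^\#_\alpha)_{B(x,R)}}/R^{\beta}=R^{n/(r^\#_\alpha)_{B(x,R)}-\beta}$ is \emph{unbounded} as $R\to\infty$ unless $(r^\#_\alpha)_\infty=(r^\#_\alpha)(x)$. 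Also, the assertion that ``the effective range of $R$ is a bounded power of $\delta$ and $t$'' is misleading: in both bounds $R=t^{-1/(\alpha+\beta)}$ sweeps all of $[\delta,\infty)$ as $t$ ranges over $(0,\infty)$. What actually saves the argument is not a pointwise comparison of the two exponents, but that the portion of the $t$-integral where they disagree (roughly $t\in(0,1)$, corresponding to $R>1$ and hence the exponent $s_\infty$) contributes only a bounded constant, which is then dominated by the main contribution from $t\in(1,t_0)$ where the local exponent $s(x)$ governs; this is exactly the splitting the paper carries out via \cite[Corollary~4.5.9]{DieHHR11} and the explicit decomposition $\int_0^{t_0}=\int_0^1+\int_1^{t_0}$. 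Without that splitting your intermediate estimate $K(t,g)\approx\min\{t^{\beta/(\alpha+\beta)},\delta^{-\beta}\}$ is false for small $t$, and the proof as written does not close. The fix is precisely the paper's two-regime argument; it is not a routine absorption of constants.
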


\begin{proof}
Let $B:=B(x,\delta)$ and denote $f(y):= |x-y|^{\alpha-n}\chi_{\Rn\setminus B}(y)$.
By the definition of the interpolation norm,
\[
\| f\|_{(L^{r'(\cdot)},L^1)_{\theta,1}}
=
\int_0^\infty t^{-\theta} \inf_{f_1+f_2=f} \big(\|f_1\|_{r'(\cdot)} +  t\, \|f_2\|_1\big) \frac{dt}t.
\]
Suppose that $f_1+f_2=f$ and denote $A:= \{ |f_1| \ge \frac12 |f|\}$. 
Then $|f_1| \ge \frac12 |f| \chi_A$ and $|f_2| \ge \frac12 |f| \chi_{\Rn\setminus A}$, so that 
\[
\|f_1\|_{r'(\cdot)} +  t\, \|f_2\|_1
\ge
\tfrac12 \|f \chi_A \|_{r'(\cdot)} +  \tfrac12 t\, \|f \chi_{\Rn\setminus A}\|_1.
\]
Hence
\[
\inf_{f_1+f_2=f} \big(\|f_1\|_{r'(\cdot)} +  t\, \|f_2\|_1\big)
\ge 
\tfrac12 \inf_{A\subset \Rn} \big(\|f \chi_A \|_{r'(\cdot)} + t\, \|f \chi_{\Rn\setminus A}\|_1 \big).
\]
On the other hand the opposite inequality holds with constant $1$, 
since we may choose $f_1=f\chi_A$ and $f_2=f\chi_{\Rn\setminus A}$ 
in the first infimum. So we conclude that
\[
\inf_{f_1+f_2=f} \big(\|f_1\|_{r'(\cdot)} +  t\, \|f_2\|_1\big)
\approx
\inf_{A\subset \Rn} \big(\|f \chi_A \|_{r'(\cdot)} + t\, \|f \chi_{\Rn\setminus A}\|_1 \big).
\]
Since $r'>1$, the infimum is not achieved when $\sup \{|f| \chi_A\} > \inf \{|f| \chi_{\Rn\setminus A} \}$ (since in this case we can shift 
mass to decrease the $L^{r'(\cdot)}$-norm while conserving the $L^1$-norm). Assuming that $|\{f=c\}|=0$ for all $c\in \R$, it follows that 
$A$ must be of the form $\{ |f| < c\}$ for some $c\ge 0$. 
In our case, $f$ is radially decreasing and so $A = \Rn\setminus B(R)$, for some $R\in [\delta,\infty]$. This corresponds to the functions
$f_1=|x-\cdot|^{\alpha-n}\chi_{\Rn\setminus B(R)}$ and
$f_2=|x-\cdot|^{\alpha-n}\chi_{B(R)\setminus B}$. 

For simplicity we denote
$s:=r^\#_\alpha$. A straight calculation gives $\|f_2\|_1 \approx
R^\alpha - \delta^\alpha$.  Then it follows from Lemma~\ref{lem:norm}
that
\[
\| f\|_{(L^{r'(\cdot)},L^1)_{\theta,1}}
\approx
\int_0^\infty t^{-\theta} \inf_{R\in [\delta,\infty)} \Big(R^{-\frac n{s_{B(R)}}} +
t\, (R^\alpha - \delta^\alpha)\Big) \frac{dt}t.
\]
By \cite[Corollary~4.5.9]{DieHHR11}, $R^{-\frac n{s_{B(R)}}} \approx R^{-\frac n{q}}$
where $q:=s_\infty$ if $R\ge 1$ and $q:=s(x)$ otherwise.
Recall that $s_\infty$ is the limit value of $s$ at infinity,
from the definition of $\log$-H\"older continuity.

We further observe that
\[
\inf_{R\in [\delta,\infty)} \Big(R^{-\frac nq} +
t\, (R^\alpha - \delta^\alpha)\Big)
\approx
\inf_{R\in [\delta,\infty)} \Big(\tfrac{\alpha q}n R^{-\frac nq} +
t\, (R^\alpha - \delta^\alpha)\Big),
\]
since $\frac{\alpha q}n$ is bounded away from $0$ and infinity.
Then we apply Lemma~\ref{lem:infimum} twice,
for $\beta=\frac n {s_\infty}$ and $\beta=\frac n {s(x)}$,
to conclude that
\[
\inf_{R\in [\delta,\infty]} \Big(\tfrac{\alpha q}n R^{-\frac n q} + t\, (R^\alpha - \delta^\alpha) \Big)
\approx
\min\Big\{t^{\frac{n}{n+\alpha q}}, \delta^{-\frac n q} \Big\},
\]
where $q:=s_\infty$ if and only if $t\ge 1$ and $\delta \le 1$
and $q:=s(x)$ otherwise.

Let $t_0>0$ be such that $t_0^\frac{n}{n+\alpha q} = \delta^{-\frac n{s_B}}$.
Then
\[
\| f\|_{(L^{r'(\cdot)},L^1)_{\theta,1}}
\approx
\int_0^{t_0} t^{-\theta-1 + \frac{n}{n+\alpha q}}\, dt
+
\delta^{-\frac n{s_B}} \int_{t_0}^\infty t^{-\theta-1}\, dt.
\]
If $t_0>1$ (so that $\delta<1$) we find that
\[
\int_0^{t_0} t^{-\theta-1 + \frac{n}{n+\alpha q}}\, dt
=
\int_0^1 t^{-\theta-1 + \frac{n}{n+\alpha s_\infty}} dt
+
\int_1^{t_0} t^{-\theta-1 + \frac{n}{n+\alpha s(x)}} dt
\approx
t_0^{-\theta + \frac{n}{n+\alpha s(x)}}.
\]
So in this case
\[
\| f\|_{(L^{r'(\cdot)},L^1)_{\theta,1}}
\approx
\Big(
t_0^{-\theta + \frac{n}{n+\alpha s(x)}} + \delta^{-\frac n{s_B}} t_0^{-\theta} \Big)
\approx
\delta^{-\frac n{s_B} + \theta \frac n{s_B}\frac{n+\alpha s(x)}{n}}.
\]
Since $p$ is $\log$-H\"older continuous and $x\in B=B(x,\delta)$, we have
$\delta^{s_B}\approx \delta^{s(x)}$. Thus
\[
\delta^{-\frac n{s_B} + \theta \frac n{s_B}\frac{n+\alpha s(x)}{n}}
=
\delta^{(\theta \alpha s(x) + (\theta - 1)n)\frac 1{s_B}}
\approx
\delta^{\theta \alpha + (\theta - 1)n\frac 1{s_B}}
=
\delta^{-\frac{n}{p^\#_B}}.
\]

For $t_0\le 1$ we similarly conclude that
$\displaystyle
\| f\|_{(L^{r'(\cdot)},L^1)_{\theta,1}}
\approx
\delta^{-\frac {n}{p^\#_B}},
$
using that $\delta^{s_B}\approx \delta^{s_\infty}$ which holds
by the $\log$-H\"older decay since $\delta\ge 1$. \qed
\end{proof}

According to \cite[Theorem~1.11.2]{Tri78} the duality formula
\[
\big((A_0,A_1)_{\theta,q}\big)^* = (A_0^*,A_1^*)_{\theta,q'}
\]
holds when $q\in [1,\infty)$ and $A_0 \cap A_1$ is dense both in $A_0$ and in $A_1$.
We choose $A_0=L^{p'(\cdot)}(\Rn)$, $A_1=L^1(\Rn)$ and $q=1$. Then we obtain
\[
(L^{p'(\cdot)},L^1)_{\theta,1}^* = (L^\px,L^\infty)_{\theta,\infty}.
\]
Hence we obtain the H\"older inequality
\[
\int_\Rn f(x) g(x) \, dx \lesssim \|
f\|_{(L^\px,L^\infty)_{\theta,\infty}} \|
g\|_{(L^{p'(\cdot)},L^1)_{\theta,1}}.
\]

In the following result we generalize \cite[Lemma~6.1.5]{DieHHR11} where the
same conclusion was reached under the stronger assumption that
 $\|f\|_{L^\px}\le 1$.

\begin{lemma}
  \label{lem:rieszvsM2}
  Let $p\in \PPln(\Rn)$ with $1<p^- \le p^+ < \frac{n}{\alpha}$ for
  $\alpha\in (0,n)$. Let $x\in \Rn$, $\delta>0$, and $f \in
  \wL^{p(\cdot)}(\Rn)$ with $\|f\|_{\weakL^\px}\le 1$. Then
  \begin{align*}
    \int_{\Rn \setminus B(x,\delta)}
    \frac{|f(y)|}{|x-y|^{n-\alpha}} \,dy
    \lesssim
    |B(x,\delta)|^{-\frac{1}{(p^\#_\alpha)_{B(x,\delta)}}}.
  \end{align*}
%The implicit constant depends only on
%$\log$-H\"{o}lder constant of $p$, $p^-$, $p^+$, $\alpha$, and $n$.
\end{lemma}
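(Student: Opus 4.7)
The plan is to rewrite the integral as a duality pairing between $|f|$ and the kernel $g(y):=|x-y|^{\alpha-n}\chi_{\Rn\setminus B(x,\delta)}(y)$, and then to apply the H\"older-type inequality from interpolation duality recorded just above the statement. The $f$-side will be identified with the weak Lebesgue norm via Theorem~\ref{thm:LorentzInfinity}, while the $g$-side will be estimated by Lemma~\ref{lem:interpolationNormEstimate}. These two auxiliary results have been set up precisely so that such a dovetailing is possible.

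In detail, since $p^->1$ and $p^+<n/\alpha$, I would first fix $\theta\in(0,1)$ small enough that $r:=(1-\theta)p$ still satisfies $1<r^-\le r^+<n/\alpha$, which puts us in the common range of both auxiliary results. Writing $B:=B(x,\delta)$, the integral in question equals $\int_\Rn |f(y)|\,g(y)\,dy$. The H\"older inequality from the excerpt (applied with $r$ in place of the generic exponent used there) then gives
\[
\int_\Rn |f|\,g\,dy \;\lesssim\; \|f\|_{(L^{r(\cdot)},L^\infty)_{\theta,\infty}}\,\|g\|_{(L^{r'(\cdot)},L^1)_{\theta,1}}.
\]
Since $r=(1-\theta)p$, Theorem~\ref{thm:LorentzInfinity} identifies $(L^{r(\cdot)},L^\infty)_{\theta,\infty}$ with $\weakL^\px$, so the first factor is at most a constant times $\|f\|_{\weakL^\px}\le 1$ by hypothesis. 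Lemma~\ref{lem:interpolationNormEstimate}, applied to the exponent $r$, bounds the second factor by a constant times $|B|^{-1/(p^\#_\alpha)_B}$ (note that its conclusion is already formulated in terms of $p^\#_\alpha$, not $r^\#_\alpha$, so nothing needs to be converted). Multiplying the two estimates yields the desired inequality.

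I do not expect any serious obstacle, as the two preceding technical results have been tailored to be glued together in exactly this fashion. The only conceptual point worth checking is the admissibility of $\theta$, namely that a single $\theta\in(0,1)$ can be chosen so that $r=(1-\theta)p$ simultaneously satisfies all the required inequalities, together with the density hypothesis underlying the duality formula from \cite[Theorem~1.11.2]{Tri78}. Since $1<p^-\le p^+<n/\alpha$, any sufficiently small $\theta>0$ works, and the density of $L^{r'(\cdot)}\cap L^1$ in both factors is standard under the log-H\"older assumption, so everything is well-defined.
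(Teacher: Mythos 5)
Your proposal is correct and follows essentially the same route as the paper's proof: write the integral as a pairing, apply the H\"older inequality coming from the duality $(L^{r'(\cdot)},L^1)_{\theta,1}^* = (L^{r(\cdot)},L^\infty)_{\theta,\infty}$, identify the latter with $\weakL^{p(\cdot)}$ via Theorem~\ref{thm:LorentzInfinity}, and bound the kernel via Lemma~\ref{lem:interpolationNormEstimate}. The paper only imposes $r^->1$ when choosing $\theta$ (the bound $r^+<n/\alpha$ is automatic since $r\le p$), but this is a cosmetic difference.
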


\begin{proof}
Set $B:=B(x,\delta)$ and $r:=(1-\theta)p$, where $\theta>0$ is so small that
$r^->1$. By Theorem~\ref{thm:LorentzInfinity} we have $(L^{r(\cdot)},L^\infty)_{\theta,\infty}= \wL^\p$ and thus by
H\"{o}lder's inequality, the assumption $\|f\|_{\wL^\p}\le 1$ and
Lemma~\ref{lem:interpolationNormEstimate} we obtain that
\[
      \int_{\Rn \setminus B}
      \frac{\abs{f(y)}}{\abs{x-y}^{n-\alpha}} \,dy
      \lesssim \| f\|_{(L^{r(\cdot)},L^\infty)_{\theta,\infty}}
\big\|  \abs{x-\cdot}^{\alpha-n}\big\|_{(L^{r'(\cdot)},L^1)_{\theta,1}}
      \lesssim |B|^{-\frac{1}{(p^\#_\alpha)_{B}}}. 
\qedhere
\]
\end{proof}

With this result we immediately obtain a generalization of \cite[Lemma~6.1.8]{DieHHR11}
as follows, where similarly the condition $\norm{f}_{p(\cdot)} \leq 1$ has been replaced
by $\norm{f}_{\weakL^p(\cdot)} \leq 1$:

\begin{lemma}\label{lem:riesz-vs-M}
Let $p\in \PPln(\Rn)$ with $1<p^- \le p^+ < \frac{n}{\alpha}$ for $\alpha\in (0,n)$.
Then
  \begin{align*}
    \I_\alpha f(x)^{p^\#_\alpha(x)}
    &\lesssim \M f(x)^{p(x)} + h(x),
  \end{align*}
  for all $x\in \Rn$, and $f \in \wL^{p(\cdot)}(\Rn)$ with $\norm{f}_{\weakL^\px} \leq 1$, where $h \in \weakL^1(\Rn) \cap L^\infty(\Rn)$ is positive.
The implicit constant and $h$ depend only on
$\log$-H\"{o}lder constant of $p$, $p^-$, $p^+$, $\alpha$, and $n$.
\end{lemma}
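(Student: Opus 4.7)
The plan is to apply Hedberg's interpolation trick in the variable exponent setting, this time using Lemma~\ref{lem:rieszvsM2} in place of the usual H\"older estimate on the complement of a ball. Fix $x\in\Rn$ and, for a parameter $\delta>0$ to be chosen, split
\[
\I_\alpha f(x) = N_\delta(x) + F_\delta(x),
\]
where
\[
N_\delta(x) := \int_{B(x,\delta)} \frac{|f(y)|}{|x-y|^{n-\alpha}}\,dy, \qquad F_\delta(x) := \int_{\Rn\setminus B(x,\delta)} \frac{|f(y)|}{|x-y|^{n-\alpha}}\,dy.
\]

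First I would bound the near part by the standard dyadic annular decomposition $B(x,\delta) = \bigcup_{k\geq 0}\bigl(B(x,2^{-k}\delta)\setminus B(x,2^{-k-1}\delta)\bigr)$. Replacing $|x-y|^{\alpha-n}$ by its supremum over each annulus and using $|f|_{B(x,r)}\le \M f(x)$, one sums the resulting geometric series to obtain $N_\delta(x) \lesssim \delta^\alpha \M f(x)$, with a constant depending only on $\alpha$ and $n$. For the far part, Lemma~\ref{lem:rieszvsM2} yields directly
\[
F_\delta(x) \lesssim |B(x,\delta)|^{-1/(p_\alpha^\#)_{B}} \approx \delta^{-n/(p_\alpha^\#)_B},
\]
with $B:=B(x,\delta)$. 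Consequently $\I_\alpha f(x) \lesssim \delta^\alpha \M f(x) + \delta^{-n/(p_\alpha^\#)_B}$.

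Next I would optimize in $\delta$. The algebraic identity $\alpha + n/p_\alpha^\#(x) = n/p(x)$ satisfied by the Sobolev conjugate suggests the Hedberg choice $\delta \approx (\M f(x))^{-p(x)/n}$, up to replacing the pointwise values by the corresponding harmonic means on $B$. With this $\delta$, balancing the two summands should give, at least formally, $\I_\alpha f(x)^{(p_\alpha^\#)_B} \lesssim \M f(x)^{p_B}$, which is the desired inequality modulo the transition from ball means to pointwise values.

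The main obstacle is precisely this transition: $\delta$ and hence $B$ depend on $x$ in a non-trivial way and may be arbitrarily small or arbitrarily large, so the exponents $p_B, (p_\alpha^\#)_B$ must be converted into the pointwise $p(x), p_\alpha^\#(x)$ asked for by the statement. Each such conversion produces a multiplicative factor of the form $\delta^{c(p(x)-p_B)}$ or $\delta^{c((p_\alpha^\#)_B - p_\alpha^\#(x))}$; for $\delta\le 1$ these are controlled through the local $\log$-H\"older oscillation bound \eqref{eq:logH-osc}, and for $\delta\ge 1$ through the $\log$-H\"older decay condition at infinity. The additive residue generated by this process is then collected into the function $h$, built exactly as in the proof of Lemma~\ref{lem1H} (cf.~\cite[Lemma~4.3.6]{DieHHR11}), so that $h\in \weakL^1(\Rn)\cap L^\infty(\Rn)$ with bounds depending only on the $\log$-H\"older constant of $p$, on $p^-$, $p^+$, $\alpha$, and $n$. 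The trivial cases $\M f(x)\in\{0,\infty\}$ can be dispatched separately, the inequality being either vacuous or immediate.
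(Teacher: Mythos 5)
Your proposal is correct and follows essentially the same route as the paper: the paper does not write out a proof but states that the result follows exactly as \cite[Lemma~6.1.8]{DieHHR11}, with the far-field norm estimate (\cite[Lemma~6.1.5]{DieHHR11}) replaced by Lemma~\ref{lem:rieszvsM2} for $\|f\|_{\weakL^\px}\le1$, which is precisely the Hedberg decomposition, $\delta$-optimization, and $\log$-H\"older conversion from harmonic means to pointwise exponents that you describe.
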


Then we obtain the following analogue of \cite[Theorem~6.1.9]{DieHHR11}
using the previous lemma and Theorem~\ref{thm:strongWeakM}:

\begin{theorem} \label{thm:I-is-bdd-weak}
Let $p \in \PPln(\Rn)$, $\alpha\in \PPzln(\Rn)$ and $1<p^- \le
p^+ < \frac n{\alpha^+}$. If $f\in \weakL^{\px}(\Rn)$ and
$|f|^{p(\cdot)/p^\#_\alpha(\cdot)}\in \weakL^{p^\#_\alpha
(\cdot)}(\Rn)$, then the function $x\mapsto \I_{\alpha(x)} f(x)$ belongs to
$\weakL^{p^\#_\alpha(\cdot)}(\Rn)$.
\end{theorem}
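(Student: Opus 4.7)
My approach is to combine the pointwise ``Hedberg-type'' estimate of Lemma~\ref{lem:riesz-vs-M} with the weak-space boundedness of the maximal operator from Theorem~\ref{thm:strongWeakM}. By homogeneity I normalise so that $\|f\|_{\weakL^{\px}}\le 1$ and $\bigl\||f|^{p(\cdot)/p^\#_\alpha(\cdot)}\bigr\|_{\weakL^{p^\#_\alpha(\cdot)}}\le 1$, and the modular characterisation \eqref{eq:weak-lp-modular2} reduces the goal to
\[
\sup_{\lambda>0}\int_{\{\I_{\alpha(x)} f>\lambda\}} \lambda^{p^\#_\alpha(x)}\,dx\lesssim 1.
\]
For each fixed $x$, the value $\alpha(x)\in[\alpha^-,\alpha^+]$ satisfies $p^+<n/\alpha(x)$, so Lemma~\ref{lem:riesz-vs-M} applies uniformly with constant exponent $\alpha=\alpha(x)$ and yields
\[
\I_{\alpha(x)} f(x)^{p^\#_\alpha(x)}\lesssim \M f(x)^{p(x)}+h(x),\qquad h\in \weakL^1(\Rn)\cap L^\infty(\Rn).
\]

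This pointwise bound shows $\{\I_{\alpha(x)}f>\lambda\}\subset \{\M f^{p(\cdot)}>c\lambda^{p^\#_\alpha(\cdot)}\}\cup\{h>c\lambda^{p^\#_\alpha(\cdot)}\}$. The contribution of the $h$-set is handled directly from $h\in L^\infty\cap\weakL^1$ together with the boundedness of $p^\#_\alpha$, separating the ranges $\lambda\le 1$ and $\lambda>1$. The remaining piece $\int_{\{\M f^{p(\cdot)}>c\lambda^{p^\#_\alpha(\cdot)}\}} \lambda^{p^\#_\alpha(x)}\,dx$ is the principal term.

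This last integral is the main obstacle. It involves a superlevel set of $\M f$ against the \emph{variable} threshold $\lambda^{p^\#_\alpha(x)/p(x)}$, and Proposition~\ref{prop:notWorking} makes explicit that $\|\M f\|_{\weakL^{\px}}\lesssim 1$ (obtained from Theorem~\ref{thm:strongWeakM} applied to the first hypothesis) is on its own insufficient for variable-threshold control in the variable exponent setting. The second hypothesis is precisely what supplies the missing information: Theorem~\ref{thm:strongWeakM} applied in $\weakL^{p^\#_\alpha}$ (valid since $(p^\#_\alpha)^-\ge p^->1$) to $|f|^{p/p^\#_\alpha}$ gives $\M(|f|^{p/p^\#_\alpha})\in \weakL^{p^\#_\alpha(\cdot)}$, i.e.\ information whose exponent matches the target space.

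The final bridging step — passing from the $\weakL^{p^\#_\alpha}$-bound on $\M(|f|^{p/p^\#_\alpha})$ to the needed variable-threshold bound on $\M f$ — I would carry out by decomposing $f=f\chi_{\{|f|\le 1\}}+f\chi_{\{|f|>1\}}$. On the small-value part, the pointwise inequality $|f|\le |f|^{p/p^\#_\alpha}$ (valid because $p/p^\#_\alpha<1$ and $|f|\le 1$) yields $\M(f\chi_{\{|f|\le 1\}})\le \M(|f|^{p/p^\#_\alpha})$, reducing the relevant set to one already controlled in $\weakL^{p^\#_\alpha(\cdot)}$; on the large-value part, evaluating the second hypothesis at $\mu=1$ forces $\{|f|>1\}$ to have measure at most $1$, so the first hypothesis together with a Lemma~\ref{lem1H}-style log-H\"older correction (absorbed into an enlarged $h$) completes the estimate. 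Carrying out this splitting while preserving uniformity in $\lambda$ and tracking the log-H\"older corrections across the variable exponents is the delicate heart of the proof, and is precisely what makes the argument ``complicated''.
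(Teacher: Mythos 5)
Your setup matches the paper's: normalise, invoke Lemma~\ref{lem:riesz-vs-M} pointwise at the constant $\alpha=\alpha(x)$ to get $(\I_{\alpha(x)}f(x))^{p^\#_\alpha(x)}\lesssim \M f(x)^{p(x)}+h(x)$, dispose of the $h$-term since $h\in\weakL^1(\Rn)\cap L^\infty(\Rn)\subset L^{p^\#_\alpha(\cdot)}(\Rn)$, and reduce the problem to showing $(\M f)^{t(\cdot)}\in\weakL^{p^\#_\alpha(\cdot)}(\Rn)$ with $t:=p/p^\#_\alpha$. The divergence, and the gap, is in your bridging step.

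Your small/large decomposition does not close the argument. Writing $f_1:=f\chi_{\{|f|\le 1\}}$ and using $|f_1|\le|f_1|^{t(\cdot)}$ gives $\M f_1\le\M(|f|^{t(\cdot)})\in\weakL^{p^\#_\alpha(\cdot)}(\Rn)$, but this is membership of $\M f_1$, not of $(\M f_1)^{t(\cdot)}$, in $\weakL^{p^\#_\alpha(\cdot)}$. Since $t<1$ is genuinely \emph{variable}, Proposition~\ref{prop:notWorking} (whose last clause covers only constant exponents) provides no passage between the two; the set $\{(\M f_1)^{t(x)}>c\lambda\}=\{\M f_1>(c\lambda)^{1/t(x)}\}$ is again a variable-threshold level set of a function that you control only at fixed thresholds — the very obstruction you identified as the heart of the matter, reproduced one layer down rather than resolved. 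The large-value half is vaguer still: $|\{|f|>1\}|\le 1$ plus an unspecified ``Lemma~\ref{lem1H}-style correction'' does not produce a $\weakL^{p^\#_\alpha(\cdot)}(\Rn)$ bound uniform in $\lambda$. The missing idea is the paper's auxiliary \emph{constant} exponent $t_0\in\bigl(1/(p^\#_\alpha)^-,\,t^-\bigr)$: by the constant-exponent case of Proposition~\ref{prop:notWorking} the hypothesis becomes $|f|^{t(\cdot)/t_0}\in\weakL^{t_0p^\#_\alpha(\cdot)}(\Rn)$, a space on which $\M$ is bounded by Theorem~\ref{thm:strongWeakM} because $(t_0p^\#_\alpha)^->1$; since $t(\cdot)/t_0\ge 1$, Lemma~\ref{lem1H} then moves this power outside $\M$, and raising to the constant power $t_0$ via Proposition~\ref{prop:notWorking} once more returns $(\M f)^{t(\cdot)}\in\weakL^{p^\#_\alpha(\cdot)}(\Rn)$. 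That constant $t_0$ is exactly what converts the variable-threshold problem into one the hypotheses control; without something playing that role, the decomposition you propose cannot be completed.
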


\begin{proof}
We write $t:= p/p^\#_\alpha$. By a scaling argument we may assume
that $\norm{f}_{\weakL^p(\cdot)} \leq 1$. By
Lemma~\ref{lem:riesz-vs-M}, there exists $h \in \weakL^1(\Rn) \cap L^\infty(\Rn)$
such that $(\I_{\alpha(x)} f(x))^{p^\#_\alpha(x)} \le c\, \big(\M f(x)^{p(x)} + h(x)\big)$.
Then
\[
\{ \I_{\alpha(x)} f(x) > \lambda\} \subset \{\M f(x)^{t(x)} > c\,
\lambda \} \cup \{h^{1/p^\#_\alpha(x)} > c\, \lambda \}
\]
and so we obtain
\[
\int_{\{ \I_{\alpha(x)} f(x) > \lambda\}}\lambda^{p^\#_\alpha(x)}\,
dx \le \int_{\{ \M f(x)^{t(x)} > c\,
\lambda\}}\lambda^{p^\#_\alpha(x)}\, dx + \int_{\{h^{1/p^\#_\alpha(x)}> c\, \lambda\}}
\lambda^{p^\#_\alpha(x)}\, dx.
\]
Now $h\in \weakL^1(\Rn) \cap L^\infty(\Rn) \subset
L^{p^\#_\alpha(\cdot)}(\Rn)$, so the last term is bounded. Thus
it remains to show that $(\M f)^{t(\cdot)}\in
\weakL^{p^\#_\alpha(\cdot)}(\Rn)$.

Let $t_0\in (1/(p^\#_\alpha)^-,t^-)$.  Since $|f|^{t(\cdot)}\in
\weakL^{p^\#_\alpha(\cdot)}(\Rn)$, we obtain that $|f|^{t(\cdot)/t_0}\in
\weakL^{t_0 p^\#_\alpha(\cdot)}(\Rn)$. By assumption,
$(t_0p^\#_\alpha)^- >1$ and hence it follows from Theorem~\ref{thm:strongWeakM}
that $\M(|f|^{t(\cdot)/t_0})\in
\weakL^{t_0p^\#_\alpha(\cdot)}(\Rn)$. Since $t/t_0\ge 1$, 
Lemma~\ref{lem1H} implies that $(\M f)^{t(\cdot)/t_0}\in
\weakL^{t_0p^\#_\alpha(\cdot)}(\Rn)$, and thus $(\M f)^{t(\cdot)}\in
\weakL^{p^\#_\alpha(\cdot)}(\Rn)$. \qed
\end{proof}

As was noted before, $\I_{\alpha(x)} f(x) \approx \I_\ax f(x)$ in
bounded domains. Furthermore, a $\log$-H\"older continuous exponent
in a domain can be extended to a variable exponent 
in the whole space, with the same
parameters \cite[Proposition~4.1.7]{DieHHR11}.
Thus we obtain the following corollary.

\begin{corollary} \label{cor:I-is-bdd-weak}
Let $p \in \PPln(\Omega)$, $\alpha\in \PPzln(\Omega)$, $p^- >1$
and $(\alpha p)^+ < n$.
If $f\in \weakL^{\px}(\Omega)$ and
$|f|^{p(\cdot)/p^\#_\alpha(\cdot)}\in \weakL^{p^\#_\ax}(\Omega)$, then
$\I_\ax f \in \weakL^{p^\#_\alpha(\cdot)}(\Omega)$.
\end{corollary}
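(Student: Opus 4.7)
The plan is to reduce the corollary to Theorem~\ref{thm:I-is-bdd-weak} by combining the two facts recorded in the paragraph immediately preceding the statement: the pointwise equivalence $\I_{\alpha(x)}f(x)\approx \I_\ax f(x)$ on bounded domains (from \cite[p.\,270]{Has09}), and the extension of log-H\"older exponents to $\Rn$ preserving the constants (\cite[Proposition~4.1.7]{DieHHR11}).

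First I would extend $p$ and $\alpha$ to exponents $\tilde p, \tilde\alpha$ on all of $\Rn$ preserving $\log$-H\"older continuity, the infima, and the suprema; in particular $\tilde p^->1$ and $(\tilde\alpha \tilde p)^+<n$. I would also extend $f$ by zero to $\tilde f:=f\chi_\Omega$ on $\Rn$. Since the Luxemburg norm on $\weakL^{\tilde p(\cdot)}(\Rn)$ of $\tilde f$ equals that of $f$ on $\weakL^{\px}(\Omega)$ (the defining supremum only sees the set where $|\tilde f|>\lambda$, which lies in $\Omega$), the hypotheses $f\in\weakL^{\px}(\Omega)$ and $|f|^{p(\cdot)/p^\#_\alpha(\cdot)}\in \weakL^{p^\#_\alpha(\cdot)}(\Omega)$ translate directly into $\tilde f\in\weakL^{\tilde p(\cdot)}(\Rn)$ and $|\tilde f|^{\tilde p(\cdot)/\tilde p^\#_{\tilde\alpha}(\cdot)}\in\weakL^{\tilde p^\#_{\tilde\alpha}(\cdot)}(\Rn)$.

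Theorem~\ref{thm:I-is-bdd-weak} then yields $x\mapsto \I_{\tilde\alpha(x)}\tilde f(x)\in \weakL^{\tilde p^\#_{\tilde\alpha}(\cdot)}(\Rn)$. Restricting this membership to $\Omega$ gives $x\mapsto \I_{\alpha(x)}f(x)\in \weakL^{p^\#_\alpha(\cdot)}(\Omega)$, because for $x\in\Omega$ the two integrals coincide (the kernel only depends on $x$, and $\tilde f$ is supported in $\Omega$). Finally, the pointwise equivalence $\I_{\alpha(x)}f(x)\approx \I_\ax f(x)$ on the bounded set $\Omega$ transfers the weak-$L^{p^\#_\alpha(\cdot)}$ bound from $\I_{\alpha(x)}f$ to $\I_\ax f$, giving the conclusion.

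The proof is essentially bookkeeping once Theorem~\ref{thm:I-is-bdd-weak} is in hand, so there is no substantive obstacle; the only care required is that the zero extension of $f$ and the log-H\"older extension of $(p,\alpha)$ preserve all quantitative hypotheses, and that the kernel swap $\I_{\alpha(x)}\to \I_\ax$ is valid pointwise in the bounded variable-exponent setting, both of which are already established in the paper.
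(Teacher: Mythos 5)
The proposal correctly handles the bookkeeping steps (extension of exponents to $\Rn$, zero-extension of $f$, restriction of the conclusion to $\Omega$, and the pointwise kernel swap $\I_{\alpha(x)}\leftrightarrow\I_{\ax}$), and these are indeed the ideas indicated in the paragraph before the corollary. However, there is a genuine gap: the corollary assumes only $(\alpha p)^+<n$, whereas Theorem~\ref{thm:I-is-bdd-weak} requires the strictly stronger hypothesis $p^+<n/\alpha^+$, i.e.\ $\alpha^+ p^+<n$. These conditions are not equivalent in general; for instance, $\alpha$ can be small where $p$ is large and vice versa, so that $(\alpha p)^+<n$ holds while $\alpha^+p^+\ge n$. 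Moreover, the extension of $p$ and $\alpha$ to $\Rn$ via \cite[Proposition~4.1.7]{DieHHR11} is performed for each exponent separately, preserving $p^\pm$ and $\alpha^\pm$, but not the supremum of the product $\alpha p$: the extended exponents $\tilde p$ and $\tilde\alpha$ may each attain their suprema at the same point outside $\Omega$, so $(\tilde\alpha\tilde p)^+$ can jump up to $\alpha^+p^+$, and the claim ``in particular \ldots $(\tilde\alpha\tilde p)^+<n$'' is unjustified. Even granting that one can engineer an extension preserving $(\alpha p)^+$, the theorem would still need $\tilde\alpha^+\tilde p^+<n$, which need not hold.

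The missing ingredient, which the paper supplies in the remark immediately after the corollary, is a localization argument: since $(\alpha p)^+<n$ and both exponents are log-H\"older continuous on the bounded set $\Omega$, one can cover $\Omega$ by finitely many subdomains $\Omega_1,\dots,\Omega_N$ on each of which $\alpha^+_{\Omega_j} p^+_{\Omega_j}<n$. On each piece, Theorem~\ref{thm:I-is-bdd-weak} (after extension) applies, and one then recombines the finitely many weak-type estimates; one must also control the cross contributions of the Riesz kernel between pieces, which is manageable because the kernel is locally integrable and $\Omega$ is bounded, but this step needs to be stated. Without this reduction, your argument only proves the corollary under the stronger hypothesis $\alpha^+p^+<n$.
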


Note that a direct use of Theorem~\ref{thm:I-is-bdd-weak} leads to the assumption
$\alpha^+ p^+ < n$ in the corollary. However, $(\alpha p)^+ < n$ if and only
if the domain can be split into a finite number of parts in each of which the
inequality $\alpha^+ p^+ < n$ holds, so in fact these conditions are equivalent.

%%%%%%%%%%%%%%%%%%%%%%%%%%%%%%%%%%%%%%%%%%%%%%%%%%%%%%
%%%%%%%%%%%%%%%%%%%%%%%%%%%%%%%%%%%%%%%%%%%%%%%%%%%%%%
%%%%%%%%%%%%%%%%%%%%%%%%%%%%%%%%%%%%%%%%%%%%%%%%%%%%%%

\section{The Wolff potential}
\label{sect:wolff}

Let $\mu$ be a positive, locally finite Borel measure.
The (truncated) Wolff potential is defined by
\[
  \W^{\mu}_{\alpha,p}(x,R):=\int_0^R\left(\frac{\mu(B(x,r))}{r^{n-\alpha p}}
  \right)^{1/(p-1)}\frac{dr}{r};
\]
with the full Wolff potential being $\W^{\mu}_{\alpha,p}(x):=
\W^{\mu}_{\alpha,p}(x,\infty)$. There are several ways in which this
can be generalized to the variable exponent setting. The most
straigth-forward is to consider the point-wise potential $x\mapsto
\W^{\mu}_{\alpha(x),p(x)}(x,R)$.

In this case we immediately obtain the following inequality from the
constant exponent setting:
\[
\W^{\mu}_{\alpha(x),p(x)}(x) \lesssim \I_{\alpha(x)}\Big( \I_{\alpha(x)}\mu^\frac1{p(x)-1} \Big)(x).
\]
This was observed in \cite[Subsection~5.2]{BogH10}. As we have noted,
in the bounded domain case the Riesz potentials $\I_{\alpha(x)}f(x)$ and $\I_\ax f(x)$ are comparable. Thus we obtain that
\[
\W^{\mu}_{\alpha(x),p(x)}(x,R) \lesssim \I_{\ax}\Big( \I_{\ax}\mu^\frac1{p(x)-1} \Big)(x).
\]
However, there is no immediate way to change the exponent $\frac1{p(x)-1}$. As far as we can see, the above inequality \textit{cannot} be used to derive Theorem~\ref{thm:SOLA-weak}, thus the validity of the claims in this part of \cite[Section~5.2]{BogH10} are in doubt. (Additionally, their claim that $\I_\ax : L^\px(\Rn) \to L^{p^\#_\alpha(\cdot)}(\Rn)$
is bounded is false, see \cite[Example~4.1]{Has09}; the claim only holds for bounded domains. Of course, the latter claim is what is actually needed.)

The Wolff potential has also been studied by F.-Y.\ Maeda
\cite{Mae09}. To state the result as clearly as possible, let us
denote $g(y):= \I_\alpha f(y)^\frac1{p(y)-1}$. Maeda proved that
\[
\W^{\mu}_{\alpha,p(x)}(x) \lesssim \I_{\alpha} g(x)
\]
Since $\I_{\alpha(x)}f(x) \approx \I_\ax f(x)$, this implies the
desired inequality, which can succinctly be stated as
\begin{equation}\label{eq:wolff-pointw}
\W^{\mu}_{\alpha(x),p(x)}(x) \lesssim \I_{\ax}\Big( \I_{\ax}\mu^\frac1{\px-1} \Big)(x),
\end{equation}
provided one keeps track of which dot is related to which operation. The right
hand side in this equation is called the Havin--Maz'ya
potential which is denoted by $\V^\mu_{\ax, \p}(x)$.

The following result is now a consequence of Theorems
\ref{thm:R-L1-to-weak} and \ref{thm:I-is-bdd-weak}, and \eqref{eq:wolff-pointw}.

\begin{theorem} \label{thm:wolff}
  Let $\alpha$, $r$, and $p$
  be bounded and $\log$-H\"older continuous, with $p^->1$ and $r^-\ge 1$, $0<\alpha^-\leq
  \alpha^+<n$, $(\alpha p r)^+<n$, and $p(x) \ge 1+ 1/r(x)-\alpha(x)/n$ for every $x\in \Omega$. If $f\in L^{r(\cdot)}(\Omega)$, then
  \[x\mapsto \W^f_{\alpha(x), p(x)}(x) \in \wL^{\frac{nr(\cdot)(\p-1)}{n-\ax\p r(\cdot)}}(\Omega).\]
\end{theorem}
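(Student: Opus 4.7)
My plan is to use the pointwise Havin--Maz'ya bound \eqref{eq:wolff-pointw}, parsed (as the paper emphasizes) with the power $1/(\px-1)$ applied to the \emph{inner} Riesz potential:
\[
\W^f_{\alpha(x),p(x)}(x) \lesssim \I_\ax\Big[(\I_\ax f)^{1/(\px-1)}\Big](x).
\]
Set $g := (\I_\ax f)^{1/(\px-1)}$ and $\tilde p(\cdot) := r^\#_\alpha(\cdot)(\px-1)$. A direct calculation with the Sobolev conjugate identifies the target weak space, since $\tilde p^\#_\alpha(\cdot) = \frac{nr(\cdot)(\px-1)}{n-\ax\px r(\cdot)}$, which is exactly the exponent in the conclusion.

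The first step is to apply Theorem~\ref{thm:R-L1-to-weak} to $f\in L^{r(\cdot)}(\Omega)$ with power $q(\cdot):=1/(\px-1)$, which lies in $\PPzln(\Omega)$ because $p$ is $\log$-H\"older with $p^->1$. The hypothesis $(\alpha r)^+<n$ follows from $(\alpha p r)^+<n$ together with $p>1$. This yields $g\in \wL^{\tilde p(\cdot)}(\Omega)$.

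The second step is to apply Corollary~\ref{cor:I-is-bdd-weak} with $\tilde p$ in place of $p$. The conditions unwind cleanly: $\tilde p^->1$ is equivalent to $\px > 1+1/r(\cdot)-\ax/n$, and $(\alpha\tilde p)^+<n$ is equivalent to $(\alpha p r)^+<n$; $\log$-H\"older continuity of $\tilde p$ is inherited from that of $r$, $p$, $\alpha$ together with the positive lower bounds on $n-\alpha r$ and $\px-1$. One input of the corollary, $g\in \wL^{\tilde p}$, is provided by the first step; the other, $|g|^{\tilde p/\tilde p^\#_\alpha}\in \wL^{\tilde p^\#_\alpha}$, is the key obstacle.

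The hard part is this auxiliary power condition: by Proposition~\ref{prop:notWorking} it does not follow from $g\in \wL^{\tilde p}$ alone, and it is precisely the obstruction flagged after \eqref{eq:wolff-pointw} which derails the cleaner argument of \cite{BogH10}. I plan to bypass it by observing that
\[
|g|^{\tilde p/\tilde p^\#_\alpha} = (\I_\ax f)^{q_2(\cdot)},\qquad q_2(\cdot):=\frac{\tilde p}{(\px-1)\tilde p^\#_\alpha} = \frac{n-\ax\px r(\cdot)}{(n-\ax r(\cdot))(\px-1)},
\]
with $q_2$ $\log$-H\"older, strictly positive and bounded under the hypotheses, and then applying Theorem~\ref{thm:R-L1-to-weak} a \emph{second} time, now with power $q_2$. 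A direct calculation gives $r^\#_\alpha/q_2 = \tilde p^\#_\alpha$, so the auxiliary hypothesis is met. Corollary~\ref{cor:I-is-bdd-weak} then yields $\I_\ax g \in \wL^{\tilde p^\#_\alpha(\cdot)}(\Omega)$, which combined with the Havin--Maz'ya estimate is the claim.
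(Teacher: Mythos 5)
Your proposal is correct and follows essentially the same route as the paper: rewrite the Havin--Maz'ya iterate, apply Theorem~\ref{thm:R-L1-to-weak} twice (first with power $1/(\px-1)$, then with the power needed to produce the auxiliary hypothesis of Corollary~\ref{cor:I-is-bdd-weak}), and then invoke that corollary. Your $\tilde p$ is exactly the paper's intermediate exponent $s=\frac{nr(\p-1)}{n-\ax r}$, your $q_2$ is exactly the paper's second choice $q=s/[(\p-1)s^\#_\alpha]$, and the two proofs differ only in notation and in how explicitly the motivation for the second application of Theorem~\ref{thm:R-L1-to-weak} is spelled out.
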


\begin{proof}
  By \eqref{eq:wolff-pointw}, it suffices to consider the
  Havin--Maz'ya potential $\V^f_{\ax, \p}$ instead of the Wolff
  potential. Denote $s:= \frac{nr(p-1)}{n-\alpha r}$; by assumption
  $p\ge 1+ 1/r-\alpha/n$ so that $s^-\ge 1$. Choosing $q:=1/(p-1)$ in
  Theorem~\ref{thm:R-L1-to-weak}, we see that
  \[
    (\I_\ax f)^{1/(\p-1)}\in \wL^{s(\cdot)}(\Omega).
  \]
Since $(\alpha p r)^+<n$, we find that $(\alpha s)^+ <n$ and thus by choosing $q:=  s/[(p-1) s^\#_\alpha]$ in Theorem~\ref{thm:R-L1-to-weak} we obtain that
 \[
    \big[(\I_\ax f)^{1/(\p-1)}\big]^{\frac{s(\cdot)}{s^\#_\alpha(\cdot)}}\in \wL^{\frac{ (\p-1) s^\#_\alpha(\cdot) \frac{nr(\cdot)}{n-\ax r(\cdot)}}{s(\cdot)}}(\Omega)= \wL^{s^\#_\alpha(\cdot)}(\Omega).
  \]
Further, since $(\alpha p r)^+<n$, we can use Corollary~\ref{cor:I-is-bdd-weak} for the function $(\I_\ax f)^{1/(\p-1)}$ to conclude that
  \[
    \V^f_{\ax, \p}\in \wL^{s^\#_\alpha(\cdot)}(\Omega).
  \]
  The claim follows from this since $s^\#_\alpha = \frac{nr(p-1)}{n-\alpha p r}$. 
\end{proof}

%%%%%%%%%%%%%%%%%%%%%%%%%%%%%%%%%%%%%%%%%%%%%%%%%%%%%%%%%%%%%
%%%%%%%%%%%%%%%%%%%%%%%%%%%%%%%%%%%%%%%%%%%%%%%%%%%%%%%%%%%%%

\section{An application to partial differential equations}
\label{sect:pde}

In this section, we discuss consequences of our results and pointwise
potential estimates for solutions to the nonlinear elliptic equation
\begin{equation}\label{eq:px-laplace-measure}
  -\dive(\abs{\nabla u}^{p(x)-2}\nabla u)=\mu,
\end{equation}
where $\mu$ is a Borel measure with finite mass. The right quantity
for estimating solutions to \eqref{eq:px-laplace-measure} and their
gradients is the Wolff potential $\W^\mu_{\alpha(x),p(x)}(x)$.

Recall that for right hand side data a Borel measure $\mu$ with
finite mass or a function in $L^1$, we use the notion of solutions
obtained as limits of approximations, SOLAs for short. Gradient
potential estimates for SOLAs follow by working with a priori more
regular solutions, and then transferring the information obtained to the
limit. In the case of general measures, the latter step requires some
care, as the approximants converge only in the sense of weak
convergence of measures. For this reason, the approximation argument
is not done using the final potential estimates. Certain intermediate
estimates, from which the actual potential estimates are then built,
need to be used instead.  See \cite[Proof of Theorem 1.4,
p. 668]{BogH10} for details.

An alternative point of view is to start with the fundamental objects
of the nonlinear potential theory related to the $\p$-Laplacian, namely 
$\p$-superharmonic functions. See \cite[Definition 2.1, p. 1068]{LMM}
for the exact definition of this class. For a $\p$-superharmonic
function $u$, there exists a measure $\mu$ such that
\eqref{eq:px-laplace-measure} holds. This is the \emph{Riesz measure}
of $u$. Important results in nonlinear potential theory are derived by
employing measure data equations like
\eqref{eq:px-laplace-measure}. The leading example is the necessity of
the celebrated Wiener criterion for boundary regularity, see
\cite{KM2}.

The gradient potential estimates in \cite{BogH10} are local: one works
in a fixed ball, compactly contained in $\Omega$. Thus the solution
under consideration can be a local SOLA, i.e.\ it suffices to
choose approximations in a fixed compact subset of $\Omega$.

 If $\mu$ is a signed measure, we use the
notation
\[
  \W^{\mu}_{\alpha(x),p(x)}(x,R)=\int_0^R\left(
    \frac{\abs{\mu}(B(x,r))}{r^{n-\alpha(x)p(x)}}
  \right)^{1/(p(x)-1)}\frac{\ud r}{r},
\]
where $\abs{\mu}$ is the total variation of $\mu$.

To extend the gradient potential estimate to $\p$-superharmonic
functions, we need the fact that these functions are local SOLAs. This
is the content of the following theorem.

\begin{theorem}\label{thm:local-sola}
Let $p$ be $\log$-H\"older continuous with $p^-\ge 2$.
  Let $u$ be a $\p$-superharmonic function in a domain $\Omega$ and let $\mu$ be the measure such that
  \begin{displaymath}
    -\dive(\abs{\nabla u}^{p(x)-2}\nabla u)=\mu.
  \end{displaymath}
  For every subdomain $\Omega'\Subset \Omega$ there are sequences of
  solutions $(u_i)$ and smooth, positive functions $(f_i)$ such that
  \begin{displaymath}
    -\dive(\abs{\nabla u_i}^{p(x)-2}\nabla u_i)=f_i \quad\text{in}\quad\Omega',
  \end{displaymath}
  $u_i\to u$ in $W^{1,\q}(\Omega')$ for any continuous $q$ such that
  $q(x)<\frac{n}{n-1}(p(x)-1)$ for all $x\in \overline{\Omega'}$, and
  $f_i\to \mu$ in the sense of weak convergence of measures.
\end{theorem}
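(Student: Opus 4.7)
Fix an intermediate domain $\Omega'\Subset\Omega''\Subset\Omega$ with smooth boundary. First I would regularize the measure: let $\rho_i$ be a standard mollifier with shrinking support and set $f_i:=\mu_{\Omega}*\rho_i$ where $\mu_\Omega$ is the restriction of $\mu$ to a set slightly larger than $\Omega''$. For $i$ sufficiently large, $f_i$ is smooth and positive on $\Omega''$, and $f_i\rightharpoonup \mu$ in the sense of weak convergence of measures on $\Omega'$.

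Next, I would construct $u_i$ as a weak solution, in $W^{1,p(\cdot)}(\Omega'')$, of the Dirichlet problem
\[
-\dive(\abs{\nabla u_i}^{p(x)-2}\nabla u_i)=f_i \text{ in } \Omega'', \qquad u_i=T_{k_i}u \text{ on } \partial\Omega'',
\]
where $T_k(s):=\min\{\max\{s,-k\},k\}$ and $k_i\to\infty$ is chosen so that $T_{k_i}u\in W^{1,p(\cdot)}(\Omega'')$ (which is possible since truncations of a $p(\cdot)$-superharmonic function lie in $W^{1,p(\cdot)}_{\loc}(\Omega)$, cf.\ \cite{LMM}). Existence for the boundary value problem is standard monotone operator theory since $f_i\in L^\infty$ and the boundary data lie in the natural Sobolev space.

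The crucial step is the convergence $u_i\to u$ in $W^{1,q(\cdot)}(\Omega')$ for every admissible $q$. Following the Boccardo--Gallou\"et strategy adapted to the variable exponent setting (as in \cite{SU, L2}), I would test the equation for $u_i$ with truncations $T_\ell(u_i-T_{k_i}u)$ and power-type functions of $(1+\abs{u_i})$ to derive, uniformly in $i$, a modular bound
\[
\int_{\Omega'} \abs{\nabla u_i}^{q(x)}\,dx \lesssim 1
\]
valid whenever $q(x)<\tfrac{n}{n-1}(p(x)-1)$ on $\overline{\Omega'}$. This provides the required uniform control. Combining Rellich--Kondrachov compactness in $W^{1,q(\cdot)}(\Omega')$ with Minty's monotonicity trick, I would pass to a subsequential limit $\tilde u$ satisfying the distributional equation with right-hand side $\mu$ on $\Omega'$.

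The last step is to identify $\tilde u$ with $u$. Since both are $p(\cdot)$-superharmonic on $\Omega'$ with the same Riesz measure $\mu$ and the boundary traces $T_{k_i}u$ converge to $u$ on $\partial\Omega''$, uniqueness of $p(\cdot)$-superharmonic functions with prescribed Riesz measure and boundary data (cf.\ \cite{LMM}) forces $\tilde u=u$ a.e.\ on $\Omega'$. The main obstacle is the uniform a priori estimate in $W^{1,q(\cdot)}(\Omega')$: in the variable exponent setting the Boccardo--Gallou\"et iteration involves powers whose exponents depend on $x$, so the modular computation must be done carefully, using $\log$-H\"older continuity of $p$ (in the spirit of \eqref{eq:logH-osc}) to compare exponents on small balls. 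A secondary obstacle is the identification of the limit, which requires invoking the Riesz measure theory for $p(\cdot)$-superharmonic functions rather than reproving it here.
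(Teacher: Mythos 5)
Your approach is genuinely different from the paper's, and the difference matters. The paper reduces the general case to that of weak supersolutions: if $u$ is a weak supersolution then $u\in W^{1,\p}_{\loc}(\Omega)$, so $\mu$ lies in the dual space $\big(W^{1,\p}_0(\Omega')\big)^*$ and the approximating $u_i$ can be constructed and shown to converge by straightforward monotone operator theory, with no identification problem (the Dirichlet problem is uniquely solvable in the energy space). For the general $\p$-superharmonic $u$, the paper first produces an increasing sequence $\widetilde{u}_j\nearrow u$ of \emph{continuous weak supersolutions} via the obstacle problem (Theorem 6.5 of \cite{HHKLM}), shows $\widetilde{u}_j\to u$ in $W^{1,\q}(\Omega')$ using pointwise convergence of gradients of truncations, and then applies the supersolution case to each $\widetilde{u}_j$ and diagonalizes. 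Because the approximating sequence is built directly from $u$ itself, there is never any question of whether the limit equals $u$.

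Your plan instead mollifies the measure and solves boundary value problems with data $f_i$ and truncated boundary values, getting a subsequential limit $\tilde u$ by Boccardo--Gallou\"et estimates, compactness and Minty's trick, and then tries to close the argument by invoking ``uniqueness of $p(\cdot)$-superharmonic functions with prescribed Riesz measure and boundary data.'' This last step is the genuine gap. First, the limit $\tilde u$ obtained by your compactness argument is a priori only a distributional solution; you would need a separate argument that $\tilde u$ is $\p$-superharmonic (and has Riesz measure exactly $\mu$ on $\Omega'$) before any uniqueness result for that class could even be applied. Second, and more fundamentally, uniqueness of solutions to $-\dive(\abs{\nabla v}^{p(x)-2}\nabla v)=\mu$ with measure data and prescribed boundary values is not available in the generality you need---it is a well-known open problem already for constant $p$, with only partial results under extra hypotheses (e.g.\ absolute continuity of $\mu$ with respect to $p$-capacity, or restriction to special solution classes); the reference \cite{LMM} that you cite does not contain such a theorem. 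The paper's route through supersolutions and the obstacle problem exists precisely to avoid having to identify a limit produced by a compactness argument with the given $u$.

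Two smaller issues. The Boccardo--Gallou\"et modular estimate on $\Omega'$ is plausible and the variable-exponent version is in the literature (\cite{SU,L2}), so that part of the plan is sound in outline, though it is substantial and would need the $\log$-H\"older machinery you mention. Also, using boundary data $T_{k_i}u$ with $k_i\to\infty$ introduces a moving boundary condition that must be tracked through the limit; the paper sidesteps this because for supersolutions the natural boundary datum $u$ itself already lies in $W^{1,\p}(\Omega'')$.
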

\begin{proof}
  This follows in the same way as in the constant exponent case,
  Theorem 2.7 in \cite{KLP}. For the reader's convenience, we sketch
  the argument here with the appropriate references for various
  auxiliary results. The proof consists of two main steps. First, we
  prove the claim when $u$ is a weak supersolution. The general case
  is then reduced to the case of supersolutions by an approximation
  argument using the obstacle problem.

  Assume first that $u$ is a weak supersolution. Then $u\in
  W^{1,\p}_{\loc}(\Omega)$, and the fact that $\mu$ belongs to the
  dual space $\big(W^{1,p(\cdot)}_0(\Omega')\big)^*$ follows from the equation satisfied by
  $u$. Then the case of supersolutions follows by arguing as in
  \cite[Lemma 2.6]{KLP} and using the elementary inequalities between
  the $p$-modular and the Luxemburg norm.

  In the general case, the fact that $u\in W^{1,\q}(\Omega')$ follows
  by a refinement of \cite[Theorem 4.4]{L}. By \cite[Theorem
  6.5]{HHKLM}, we may choose a sequence $(\widetilde{u}_i)$ of
  continuous weak supersolutions increasing to $u$. Arguing as in
  \cite[proof of Theorem 5.1]{HHKLM} we can show that
  $\nabla\min(\widetilde{u}_i,k)\to \nabla \min(u,k)$ pointwise almost
  everywhere for any $k\in \R$. It follows that $\nabla
  \widetilde{u}_i\to \nabla u$ pointwise a.e., and the pointwise
  convergences easily imply that $\widetilde{u}_i\to u$ in
  $W^{1,\q}(\Omega')$. The proof is completed by applying the case of
  supersolutions to the functions $\widetilde{u}_i$, together with the
  convergence of $\widetilde{u}_i\to u$ in $W^{1,\q}(\Omega')$, see
  the proof of Theorem 2.7 in \cite{KLP}. %\qed
\end{proof}

The following pointwise potential estimates hold for local SOLAs and
$\p$-su\-per\-har\-mo\-nic functions. See \cite{BogH10} for
\eqref{eq:pointwise-solution} and \eqref{eq:pointwise-gradient} in the
case of SOLAs, and \cite{LMM} for \eqref{eq:pointwise-solution} for
$\p$-su\-per\-har\-mo\-nic functions. Finally, the gradient estimate
\eqref{eq:pointwise-gradient} holds also for
$\p$-su\-per\-har\-mo\-nic functions by an application of Theorem
\ref{thm:local-sola}. The H\"older continuity of $p$ is required for
the gradient estimate, since its proof uses H\"older estimates for the
gradient of weak solutions (cf.\ \cite{AceM01}).

\begin{theorem}\label{thm:potential-estimates}
  Let $p$ be $\log$-H\"older continuous with $p^-\ge 2$. Let $u$ be
  positive $\p$-super\-har\-mo\-nic or a local SOLA to
  \eqref{eq:px-laplace-measure}.  Then there exists $\gamma>0$ such
  that
  \begin{equation}
    \label{eq:pointwise-solution}
    \abs{u(x_0)}\lesssim \left(\vint_{B(x_0,R)}\abs{u}^\gamma\ud x\right)^{1/\gamma}
    +\W^{\mu}_{1,p(x_0)}(x_0,2R)+R
  \end{equation}
  for all sufficiently small $R>0$.
  For positive $\p$-superharmonic functions, the assumption $p^->1$
  suffices instead of $p^- \ge 2$.

  Suppose next that $p$ is H\"older continuous. Then
  \begin{equation}
    \label{eq:pointwise-gradient}
    \abs{\nabla u(x_0)}\lesssim  \vint_{B(x_0,R)}\abs{\nabla u}\, dx
    +\W^{\mu}_{1/p(x_0),p(x_0)}(x_0,2R)+R
  \end{equation}
  for all sufficiently small $R>0$.
\end{theorem}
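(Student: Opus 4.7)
The plan is to split the statement into four sub-claims — one for each combination of (solution vs.\ gradient) $\times$ (local SOLA vs.\ $\p$-superharmonic) — and reduce three of them to existing results. The solution and gradient bounds \eqref{eq:pointwise-solution}--\eqref{eq:pointwise-gradient} for local SOLAs are exactly the main pointwise potential estimates of \cite{BogH10} under the stated regularity hypotheses on $p$; the solution-level estimate \eqref{eq:pointwise-solution} for positive $\p$-superharmonic functions is proved in \cite{LMM}, and there the hypothesis relaxes to $p^->1$ because the argument avoids the gradient H\"older theory of \cite{AceM01}. Only the gradient estimate \eqref{eq:pointwise-gradient} for a positive $\p$-superharmonic function $u$ requires genuinely new work, and this is the case I would address by combining Theorem~\ref{thm:local-sola} with the SOLA estimate of \cite{BogH10}.

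For that case, I would fix $x_0\in\Omega$ and $R>0$ small enough that $B(x_0,4R)\Subset\Omega$, choose an intermediate subdomain $\Omega'$ with $B(x_0,2R)\Subset\Omega'\Subset\Omega$, and invoke Theorem~\ref{thm:local-sola} to produce smooth nonnegative $(f_i)$ and weak solutions $(u_i)\subset W^{1,\p}(\Omega')$ of $-\dive(|\nabla u_i|^{p(x)-2}\nabla u_i)=f_i$ in $\Omega'$ such that $u_i\to u$ in $W^{1,\q}(\Omega')$ for any admissible continuous $q$, and $f_i\to\mu$ in the sense of weak convergence of measures (with $\mu\geq 0$ since $u$ is positive $\p$-superharmonic). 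Each $u_i$ is in particular a local SOLA with $L^1$ data, so the already-established SOLA gradient estimate applies and gives, uniformly in $i$,
\[
|\nabla u_i(x_0)| \lesssim \vint_{B(x_0,R)} |\nabla u_i|\,dy + \W^{f_i}_{1/p(x_0),p(x_0)}(x_0,2R) + R
\]
for almost every $x_0\in\Omega'$.

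The remaining task is to pass to the limit $i\to\infty$. The averaged-gradient term converges by the $W^{1,\q}(\Omega')$-convergence of $u_i\to u$. Inside the proof of Theorem~\ref{thm:local-sola} one also obtains $\nabla u_i\to\nabla u$ pointwise almost everywhere in $\Omega'$, so restricting to $x_0$ in the corresponding full-measure set causes the left-hand side to converge to $|\nabla u(x_0)|$. The delicate step, and the main obstacle, is handling the Wolff potential term, because weak convergence of measures is such a weak mode of convergence. It does nevertheless imply $f_i(B(x_0,r))\to\mu(B(x_0,r))$ for every $r\in(0,2R)$ with $\mu(\partial B(x_0,r))=0$, i.e.\ for all but countably many $r$; combined with a uniform mass bound $f_i(B(x_0,\rho))\lesssim \mu(\overline{B(x_0,\rho)})+1$ extracted from weak convergence, Fatou's lemma applied to the radial integral gives
\[
\W^{\mu}_{1/p(x_0),p(x_0)}(x_0,2R)\le \liminf_{i\to\infty}\W^{f_i}_{1/p(x_0),p(x_0)}(x_0,2R),
\]
which together with the previous two convergences is enough to transfer the inequality to $u$ and $\mu$. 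As the preceding discussion in the excerpt notes, the same liminf-type passage already underlies the derivation of the SOLA estimate from its intermediate version in \cite{BogH10}, so a cleaner alternative is to bypass the final SOLA estimate and apply that intermediate estimate directly to the approximating sequence $(u_i,f_i)$, avoiding any double limiting procedure.
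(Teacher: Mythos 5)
Your decomposition into four sub-claims is exactly the paper's proof: the two estimates for local SOLAs come from \cite{BogH10}, \eqref{eq:pointwise-solution} for positive $\p$-superharmonic functions is in \cite{LMM} (whence $p^->1$ suffices there), and the only genuinely new case is \eqref{eq:pointwise-gradient} for $\p$-superharmonic functions, handled via Theorem~\ref{thm:local-sola}. So the route matches the paper's.

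There is, however, a genuine gap in the limiting step you propose for the Wolff potential term. Fatou's lemma, as you invoke it, yields
\[
\W^{\mu}_{1/p(x_0),p(x_0)}(x_0,2R)\le \liminf_{i\to\infty}\W^{f_i}_{1/p(x_0),p(x_0)}(x_0,2R),
\]
and you claim this suffices to transfer the SOLA estimate from $(u_i,f_i)$ to $(u,\mu)$. It does not. Passing to the limit in
\[
|\nabla u_i(x_0)|\lesssim \vint_{B(x_0,R)}|\nabla u_i|\,dy+\W^{f_i}_{1/p(x_0),p(x_0)}(x_0,2R)+R
\]
requires an \emph{upper} bound of $\W^{f_i}$ in terms of $\W^{\mu}$, i.e.\ a $\limsup$-type estimate, whereas your Fatou inequality bounds $\W^{\mu}$ from above by $\liminf_i\W^{f_i}$ --- it runs in exactly the unusable direction. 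A reverse-Fatou argument would need a summable majorant of the radial integrand, but the uniform bound $f_i(B(x_0,r))\lesssim\mu(\overline{B(x_0,2R)})+1$ only gives a majorant comparable to $r^{-(n-1)/(p(x_0)-1)-1}$, which is not integrable near $r=0$. This is precisely why the paper (following \cite{BogH10}) does \emph{not} pass to the limit in the final potential estimate, but instead in the intermediate finite dyadic-sum estimates, which sidestep the Fatou issue altogether. Your closing sentence correctly identifies that alternative; it is not merely ``cleaner'', it is the one that actually closes the argument, and the Fatou computation preceding it should simply be discarded.
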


The restriction $p^-\geq 2$ in the gradient estimates is
related to the fact that there are substantial differences in gradient
potential estimates in the cases $p<2$ and $p>2$ even with constant
exponents, see \cite{DM2}.  For simplicity, we focus on the prototype
case \eqref{eq:px-laplace-measure} here, but this result, and hence
also Theorem \ref{thm:SOLA-weak} below, hold for more general
equations of the form
\[
  -\dive(a(x,\nabla u))=\mu
\]
under appropriate structural assumptions on $a(x,\xi)$. The interested
reader may refer to \cite{BogH10,LMM} for details.

The following result is an immediate consequence of
Theorems~\ref{thm:wolff} and \ref{thm:potential-estimates}.

\begin{theorem}\label{thm:SOLA-weak}
  Let $p$ and $r$ be $\log$-H\"older continuous with $p^-\ge 2$. Let
  $u$ be a positive $\p$-superharmonic function, or a local SOLA to
  \eqref{eq:px-laplace-measure}, with $\mu \in L^{r(\cdot)}(\Omega)$.
  \begin{itemize}
  \item[(a)]
    If $(pr)^+<n$ and $p \ge 1 + 1/r -1/n$ for every $x\in \Omega$, then
    \[
    u\in \wL_{\loc}^{\frac{nr(\cdot)(\p-1)}{n-\p r(\cdot)}}(\Omega).
    \]
    For positive $\p$-superharmonic functions, the assumption $p^->1$
    suffices.

  \item[(b)] Suppose in addition that $p$ is H\"older continuous.
    If $r^+<n$ and $p\ge 1 + 1/r -1/(np)$ for every $x\in \Omega$, then
    \[
    \abs{\nabla u}\in \wL_{\loc}^{\frac{n r(\cdot) (\p-1)}{n- r(\cdot)}}(\Omega).
    \]
  \end{itemize}
  If $r\equiv 1$, $\mu$ can be a measure with finite mass instead of a
  function.  Each of the inclusions comes with an explicit estimate.
\end{theorem}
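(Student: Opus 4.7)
My plan is to combine the pointwise potential estimates of Theorem~\ref{thm:potential-estimates} with the Wolff-to-weak-Lebesgue mapping property of Theorem~\ref{thm:wolff}. Fix $B\Subset\Omega$ and a radius $R>0$ small enough that $B(x_0,2R)\Subset\Omega$ for every $x_0\in B$ and that both pointwise bounds of Theorem~\ref{thm:potential-estimates} are valid. The strategy for both (a) and (b) is the same: the non-Wolff terms in these bounds, namely the averages $(\vint_{B(x_0,R)}|u|^\gamma\,dx)^{1/\gamma}$ or $\vint_{B(x_0,R)}|\nabla u|\,dx$ and the additive constant $R$, are uniformly bounded in $x_0\in B$ by local integrability of $u$ and $\nabla u$ (for SOLAs this is built into the construction; for positive $\p$-superharmonic functions it follows from Theorem~\ref{thm:local-sola}). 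Bounded functions on the bounded set $B$ belong to every weak Lebesgue space, so only the Wolff-potential term really needs attention.

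For part (a), I apply Theorem~\ref{thm:wolff} with $\alpha\equiv 1$. The hypotheses $\alpha^\pm=1\in(0,n)$, $(\alpha p r)^+=(pr)^+<n$, and $p\ge 1+1/r-1/n$ are precisely the assumptions in (a). Theorem~\ref{thm:wolff} then yields $x\mapsto\W^{\mu}_{1,p(x)}(x)\in\wL^{s(\cdot)}(\Omega)$ with $s(x)=\frac{nr(x)(p(x)-1)}{n-p(x)r(x)}$, which matches the target exponent. Since the truncated potential $\W^{\mu}_{1,p(x_0)}(x_0,2R)$ is dominated by the full potential evaluated at $x=x_0$, combining with the pointwise estimate proves (a). For part (b), I repeat the argument with the gradient estimate together with the choice $\alpha(x):=1/p(x)$. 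Because $p$ is H\"older continuous and $p^-\ge 2$, $\alpha$ is log-H\"older continuous with $0<\alpha^-\le\alpha^+\le 1/2$; the remaining conditions $(\alpha p r)^+=r^+<n$ and $p\ge 1+1/r-1/(np)$ coincide with the assumptions in (b); and the target exponent $\frac{nr(p-1)}{n-\alpha pr}$ simplifies to $\frac{nr(p-1)}{n-r}$, as required.

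The final assertion, allowing $\mu$ to be a finite Borel measure when $r\equiv 1$, is obtained by running the same scheme with $|\mu|$ in place of the $L^1$ function: the proofs of Theorems~\ref{thm:R-L1-to-weak} and~\ref{thm:I-is-bdd-weak} transfer verbatim once $|f(y)|\,dy$ is replaced by $d|\mu|(y)$ in the relevant Riesz integrals, so Theorem~\ref{thm:wolff} extends to that setting. I do not expect any serious obstacle in this plan beyond careful bookkeeping of exponents; in particular one must verify that the Sobolev-conjugate operation $\sigma\mapsto\sigma^\#_\alpha$ composes with the two choices of $\alpha$ above to produce exactly the exponents appearing in (a) and (b).
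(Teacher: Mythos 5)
Your proposal is correct and matches the paper's approach exactly. The paper states only that Theorem~\ref{thm:SOLA-weak} is ``an immediate consequence of Theorems~\ref{thm:wolff} and~\ref{thm:potential-estimates}'' without further elaboration, and your unpacking of that claim is sound: the choices $\alpha\equiv 1$ for~(a) and $\alpha=1/p$ for~(b) are precisely the ones that reproduce the hypotheses and target exponents (and your exponent bookkeeping is right); the non-Wolff terms are uniformly bounded on compact subsets by local integrability of $u$ and $\nabla u$; the truncated Wolff potential is dominated by the full one; and the measure case for $r\equiv 1$ reduces to extending the inner Riesz-potential estimate to measure data, which transfers without change. (A small remark: only Theorem~\ref{thm:R-L1-to-weak} needs the extension to measures, since the outer Riesz potential in the Havin--Maz'ya bound acts on the function $(\I_\ax\mu)^{1/(p(\cdot)-1)}$, not on $\mu$ itself; citing Theorem~\ref{thm:I-is-bdd-weak} in that context is unnecessary, though harmless.)
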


Theorem \ref{thm:SOLA-weak0} is of course contained in the above
theorem when $r\equiv 1$.  
The interesting case in these results is when $r^- =1$; if
$r^->1$, we can use the pointwise inequality \eqref{eq:wolff-pointw}
and the strong-to-strong estimate for the Riesz potential to get
estimates in strong Lebesgue spaces with the same exponents.

When $r\equiv 1$, the above inclusions are sharp for constant $p$ on
the scale of $\wL^q$ spaces. This is a special case of the following
examples.

\begin{example}
  Let $B$ be the unit ball in $\R^n$, and assume that the exponent $p$ is smooth
  and radial. Define the function $u$ by
  \begin{equation}
    \label{eq:fundamental-solution}
    u(x):=\int_{\abs{x}}^1(p(\rho)\rho^{n-1})^{-1/(p(\rho)-1)}\ud \rho.
  \end{equation}
    Then by \cite[Section 6]{HHKLM} $u$ is $\p$-superharmonic in $B$,
  and Theorem 4.10 of \cite{L} implies that
  \[
    -\dive(\abs{\nabla u}^{p(x)-2}\nabla u)=K\delta,
  \]
  where $K>0$ and $\delta$ is Dirac's delta at the origin. The exact
  value of $K$ is not important.

  Assume that $q$ is $\log$-H\"older continuous. We will
  show that $u\in \wL^{\q}(B)$ if and only if
  \begin{equation}\label{eq:q-cond}
    q(0)\leq \frac{n (p(0)-1)}{n-p(0)}
  \end{equation}
  and $\abs{\nabla u}\in \wL^{\q}(B)$ if and only if
  \begin{equation}
    \label{eq:q-cond-gradient}
    q(0)\leq \frac{n(p(0)-1) }{n-1} .
  \end{equation}

  We reason as follows to get these characterizations. First,
  $\log$-H\"older continuity of $p$ implies that
  \begin{equation}
    \label{eq:fund-estimate}
    \abs{u(x)}\approx \abs{x}^{-\frac{n-p(0)}{p(0)-1}}
  \end{equation}
  and
  \begin{equation}
    \label{eq:fund-gradient}
    \abs{\nabla u(x)}\approx \abs{x}^{-\frac{n-1}{p(0)-1}}.
  \end{equation}
  The inclusions
  \begin{equation}\label{eq:incl}
    \Big\{t<c^{-1}\abs{x}^{-\frac{n-p(0)}{p(0)-1}}\Big\}
    \subset\Big\{t<u(x)\Big\}\subset\Big\{t<c\abs{x}^{-\frac{n-p(0)}{p(0)-1}}\Big\}
  \end{equation}
  follow from \eqref{eq:fund-estimate}, $c\geq 1$ being the constant
  implicit in \eqref{eq:fund-estimate}.

  We use the second inclusion in \eqref{eq:incl} to get
  \[
    \int_{\{u>t\}}t^{q(x)}\ud x\leq
    \int_{\big\{t<c\abs{x}^{-\frac{n-p(0)}{p(0)-1}}\big\}}t^{q(x)}\ud x.
  \]
  We use the change of variables
  \[
    \lambda=t^{-\frac{p(0)-1}{n-p(0)}},
  \]
  and obtain that
  \[
    \int_{\big\{t<c\abs{x}^{-\frac{n-p(0)}{p(0)-1}}\big\}}t^{q(x)}\ud x
    \leq c\int_{\{\abs{x}<\lambda\}}\lambda^{-q(x)\frac{n-p(0)}{p(0)-1}}\ud x
    \leq c\int_{\{\abs{x}<\lambda\}}\lambda^{-q(0)\frac{n-p(0)}{p(0)-1}}\ud x
  \]
  where the last estimate follows from the $\log$-H\"older continuity of
  $q$.

  The last integral is finite if \eqref{eq:q-cond} holds.  Starting
  from the first inclusion in \eqref{eq:incl}, we get a similar lower
  bound. Hence $u\in \wL^{\q}(B)$ if and only if
  \eqref{eq:q-cond} holds. Repeating the same argument using
  \eqref{eq:fund-gradient}, we obtain the condition
  \eqref{eq:q-cond-gradient}.
\end{example}

\begin{example}\label{eg:fundamental-sol}
The right hand side of the differential equation in the
previous example is a delta measure, which is not an $L^1$-function.
However, the example can be modified to yield a function in $L^1$. Denote by
$u$ the function from the previous example and define
\[
v_r(x) :=
\begin{cases}
a_r - b_r \, |x| &\text{when } |x|\le r, \\
u(x) &\text{otherwise.}
\end{cases}
\]
The constants $a_r$ and $b_r$ are chosen so that $v_r\in C^1$. Then
$a_r = |\nabla u(r)| \approx r^{-\frac{n-1}{p(0)-1}}$ by the computations
above. A direct calculation shows that
\[
-\dive(\abs{\nabla v_r}^{p(x)-2}\nabla v_r)
= a_r^{p(x)-1} \Big( \frac{n-1}{|x|} + p'(x) \log a_r\Big).
\]
If we suppose that $p$ is Lipschitz continuous, then
\[
\frac{n-1}{|x|} + p'(x) \log a_r \approx \frac{n-1}{|x|}
\]
for small enough $r$ and so the right hand side of this
equation is positive in $B(0,r)$. Furthermore, the right hand side is
in $L^1$ uniformly and $v_r \nearrow u$ as $r\to 0$, so we see that
the conclusions from the previous example hold also for the $L^1$ case.
\end{example}

%\begin{acknowledgements}
\paragraph{\textbf{Ackowledgments}}
We would like to thank the referee for useful comments which improved 
the readability of the paper. 

A.\ Almeida was supported in part by {\it FEDER} funds through {\it COMPETE}--Operational Programme Factors of Competitiveness (``Programa Operacional Factores de Competitividade'') and by Portuguese funds through the {\it Center for Research and Development in Mathematics and Applications} (University of Aveiro) and the Portuguese Foundation for Science and Technology (``FCT--Fundação para a Ciência e a Tecnologia''), within project PEst-C/MAT/UI4106/2011 with COMPETE number FCOMP-01-0124-FEDER-022690.

P.\ H\"ast\"o was supported in part by the Academy of Finland and the 
Swedish Cultural Foundation in Finland.
%\end{acknowledgements}

%%%%%%%%%%%%%%%%%%%%%%%%%%%%%%%%%%%%%%%%%%%%%%%%%%%%%%%%%%%%%%%%%%%%%%%
%%%%%%%%%%%%%%%%%%%%%%%%%%%%%%%%%%%%%%%%%%%%%%%%%%%%%%%%%%%%%%%%%%%%%%%
%%%%%%%%%%%%%%%%%%%%%%%%%%%%%%%%%%%%%%%%%%%%%%%%%%%%%%%%%%%%%%%%%%%%%%%

%\bibliography{biblioBibtexNew}
%\bibliographystyle{amsplain}
%\bibliographystyle{abbrv}

\vspace{0.5cm}

\noindent\small{\textsc{A.\ Almeida}}\\
\small{Center for Research and Development in Mathematics and Applications, Department of Mathematics, University of Aveiro, 3810-193 Aveiro, Portugal}\\
\footnotesize{\texttt{jaralmeida@ua.pt}}\\

\noindent\small{\textsc{P.\ Harjulehto}}\\
\small{Department of Mathematics and Statistics,
FI-20014 University of Turku, Finland}\\
\footnotesize{\texttt{petteri.harjulehto@utu.fi}}\\

\noindent\small{\textsc{P.\ H\"{a}st\"{o}}}\\
\small{Department of Mathematical Sciences,
P.O.\ Box 3000, FI-90014 University of Oulu, Finland}\\
\footnotesize{\texttt{peter.hasto@helsinki.fi}}\\

\noindent\small{\textsc{T.\ Lukkari}}\\
\small{Department of Mathematics and Statistics,
 P.O.Box 35 (MaD), FI-40014 University of Jyv\"{a}skyl\"{a}, Finland}\\
\footnotesize{\texttt{teemu.j.lukkari@jyu.fi}}\\

\end{document}